\newtheorem*{thm*}{Theorem I}
\numberwithin{equation}{section}
\newtheorem{thm}{Theorem}
\numberwithin{thm}{section}
\newtheorem{lemma}[thm]{Lemma}
\newtheorem{cor}[thm]{Corollary}
\theoremstyle{definition}
\newtheorem{definition}[thm]{Definition}
\theoremstyle{remark}
\newcommand{\ds}{\displaystyle}
\newcommand{\R}{\mathbb{R}}
\DeclareMathOperator*{\esssup}{ess\,sup}
\newcommand{\norm}[1]{\left\| #1\right\|}
\newcommand{\Lp}[2]{L^{#1}\left(#2\right)}
\newcommand{\dive}[1]{\;{\rm div}\; #1} 
\newcommand{\Ri}[1]{{\rm {\bf I}_{#1}}}
\newcommand{\curl}{\;{\rm curl}\;} 
\newcommand{\M}{\mathcal{M}}
\begin{document}

\pagestyle{headings}

\title[Leray's self-similar solutions in Marcinkiewicz spaces]{Leray's self-similar solutions to the Navier--Stokes equations with profiles in  Marcinkiewicz and Morrey spaces}

\author[Cristi Guevara]{Cristi Guevara}
\address{Department of Mathematics,
Louisiana State University,
303 Lockett Hall, Baton Rouge, LA 70803, USA.}
\email{cguevara@lsu.edu}

\author[Nguyen Cong Phuc]
{Nguyen Cong Phuc$^*$}
\address{Department of Mathematics,
Louisiana State University,
303 Lockett Hall, Baton Rouge, LA 70803, USA.}
\email{pcnguyen@math.lsu.edu}

\thanks{$^*$Supported in part by the Simons Foundation, Award Number: 426071}

%\today

\begin{abstract}
We rule out the existence of Leray's backward self-similar solutions to the Navier--Stokes equations with profiles in $L^{12/5}(\R^3)$ or in the Marcinkiewicz space 
$L^{q,\infty}(\R^3)$ for $q\in(12/5, 6)$. This follows from a more general result formulated in terms of Morrey spaces and the first order Riesz's potential.
\end{abstract}

\maketitle

\section{Introduction}

The motion of an incompressible fluid in three spatial dimensions, $\R^3,$  with viscosity $\nu>0$  and zero external force is described 
by the Navier--Stokes equations 
\begin{eqnarray}\label{NSE}
\left\{\begin{array}{c}
u_t-\nu\Delta u+u\cdot\nabla u+\nabla p=0,  
\\ \dive u=0,
\end{array}\right.
\end{eqnarray}
with an initial condition $ u(x, 0) = u_0(x).$  Here  the unknown velocity  $u = u(x,t) = (u_1(x,t),u_2(x,t),u_3 (x,t))\in  \R^3$ and the unknown  pressure $p = p(x,t)\in \R$ are defined for each position $x \in \R^3$ and time $t\geq0$.

Since Leray's work \cite{Leray34} in 1934, there has been a long-standing question whether  solutions to \eqref{NSE} develop a singularity in finite time, or whether \eqref{NSE} admits a time-global smooth solution for any given smooth and compactly supported initial datum $u_0$. To look for a singular solution, Leray \cite{Leray34} suggested to consider the backward self-similar solutions of \eqref{NSE}, i.e.,  those  of the form
\begin{equation}\label{LSln}
u(x,t)=\lambda(t)U(\lambda(t)x), \qquad  p(x,t)=\lambda^2(t)P(\lambda(t)x),
\end{equation}
where 
$$\lambda(t)=\dfrac{1}{\sqrt{2a(T-t)}}, \quad a>0, \quad  T>0,$$
and 
$U=(U_1,U_2,U_3)$ and $P$ are  defined in the whole $\R^3$. Note then that $u$ is defined in $\R^3\times (-\infty, T)$ and  if the profile $U$ is not identically zero then $u$ given by \eqref{LSln} develops a singularity at time $t=T$. Here  certain natural energy norms of $u$ should be required to be finite. For otherwise
the profile $U=\nabla \Phi$ and $P=-\frac{1}{2}|U|^2 -a y\cdot U$, for any non-zero harmonic function $\Phi$, would immediately yield a non-trivial self-similar solution.

By direct calculations,  one finds that $(u, p)$ of the form \eqref{LSln} is a solution of  \eqref{NSE} if and only if
 $(U, P)$  solves the following nonlinear time-independent system in $\R^3$:
\begin{eqnarray}\label{SNSE}
\left\{\begin{array}{c}
-\nu\Delta U+aU+a(y\cdot \nabla) U+(U\cdot \nabla)U+ \nabla P=0,  
\\ \dive U=0.
\end{array}\right.
\end{eqnarray}

Leray's question  was open  until 1996 when  Ne{\v{c}}as, R{\r u}{\v{z}}i{\v{c}}ka,  and  {\v S}ver\'ak \cite{NeRuSv96} showed that there does not exist a non-trivial solution of the form \eqref{LSln} with finite kinetic energy and satisfies the natural {\it global} energy inequality
\begin{equation}\label{global-energy}
 \int_{\R^3}\dfrac12|u(x, t)|^2dx+\int_{t_1}^{t}\int_{\R^3}\nu|\nabla u(x,t)|^2dxdt\leq\int_{\R^3}\dfrac12| u(x, t_1)|^2dx
\end{equation}
for all $t\in (t_1, t_2)$. More generally, they proved that if $U\in W^{1,2}_{\rm loc}(\R^3)\cap L^3(\R^3)$ is a weak solution of \eqref{SNSE} then $U\equiv 0$.
Note that the  $L^3$   {\it global} integrability condition of $U$ holds if the corresponding self-similar solution $u$   satisfies the global energy estimates \eqref{global-energy}.

On the other hand, \cite{NeRuSv96} left open the question of  existence of self-similar singularities which satisfy only the {\it local} energy inequality 
\begin{equation}\label{local-energy}
\esssup_{t_3<t<T} \int_{B_r(x_0)}\dfrac12|u(x, t)|^2dx+\int_{t_3}^{T}\int_{B_r(x_0)}\nu|\nabla u(x,t)|^2dxdt<+\infty,
\end{equation}
for some ball $B_r(x_0)$ and some $t_3<T$. This question was later answered by Tsai in \cite{Tsai98}, where he showed that backward self-similar solutions to \eqref{NSE} satisfying  \eqref{local-energy} must also be zero:

\begin{thm}[Tsai \cite{Tsai98}]\label{T1.1} Suppose $u$ is a weak solution of \eqref{NSE} satisfying the finite local energy condition
\eqref{local-energy} for some ball $B_r(x_0)$ and some $t_3<T$. If $u$ is of the form
\eqref{LSln}, then $u\equiv 0$.
\end{thm}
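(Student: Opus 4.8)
The plan is to reduce Tsai's theorem to the Liouville theorem of Ne\v{c}as--R\r{u}\v{z}i\v{c}ka--\v{S}ver\'ak by showing that the local energy bound \eqref{local-energy}, combined with the self-similar structure \eqref{LSln}, already forces the profile $U$ to lie in $L^3(\R^3)$ (and $W^{1,2}_{\rm loc}$). First I would translate the local energy inequality \eqref{local-energy} into a statement about $U$: substituting \eqref{LSln} and changing variables $y=\lambda(t)x$, $t\mapsto \lambda(t)$, the quantity $\int_{B_r(x_0)}|u(x,t)|^2\,dx$ becomes, after rescaling, an integral of $|U|^2$ over a ball whose center escapes to infinity and whose radius shrinks like $\sqrt{T-t}$ as $t\uparrow T$. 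Carefully tracking the powers of $\lambda(t)$, the finiteness of the supremum in $t$ should yield a decay estimate for $U$ of the form $\int_{B_1(y)}|U|^2\,dy \lesssim |y|^{-1}$ as $|y|\to\infty$ (this is the scaling-critical decay, since $\int |U|^3$ has the same scaling as $\int |\nabla u|^2\,dt$ over a parabolic region). Similarly the dissipation term controls $\int_{\R^3}|\nabla U|^2\,dy<\infty$ on suitable regions, giving local $W^{1,2}$ regularity.

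The second step is to upgrade this weighted $L^2$ decay of $U$ to the global integrability $U\in L^3(\R^3)$. Here I would use the equation \eqref{SNSE} itself: since $U$ is a solution of a (perturbed) stationary Navier--Stokes system with the extra damping term $aU + a(y\cdot\nabla)U$, one can run a bootstrap/regularity argument. Away from the origin $U$ is smooth by standard elliptic regularity for \eqref{SNSE}, and the damping term $a(y\cdot\nabla)U$ is favorable at infinity — testing the equation against $U$ on annuli and exploiting the good sign of this term (integration by parts turns $a\int (y\cdot\nabla)U\cdot U$ into $-\tfrac{3a}{2}\int|U|^2$ plus boundary terms) should convert the weak decay $\int_{B_1(y)}|U|^2 \lesssim |y|^{-1}$ into genuine pointwise decay $|U(y)|\lesssim |y|^{-1}$, or at least into $U\in L^3$. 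Once $U\in W^{1,2}_{\rm loc}(\R^3)\cap L^3(\R^3)$ is established, the Ne\v{c}as--R\r{u}\v{z}i\v{c}ka--\v{S}ver\'ak result (quoted in the excerpt) applies verbatim and gives $U\equiv 0$, hence $u\equiv 0$.

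The main obstacle I anticipate is the second step: passing from the scaling-critical \emph{weighted} bound coming from \eqref{local-energy} to the \emph{global} $L^3$ bound. The local energy inequality only controls $U$ on a cone-like region (the image of the fixed ball $B_r(x_0)$ under the rescalings), not on all of $\R^3$ at once, so one must use the PDE to propagate the information to the full space. The favorable damping term is the key mechanism, but making the cutoff/annulus estimates close — keeping track of the lower-order terms $aU$, $(y\cdot\nabla)U$ and the pressure $P$ (which must be handled via the identity $-\Delta P = \partial_i\partial_j(U_iU_j) + \text{lower order}$ and a Calder\'on--Zygmund/Riesz potential bound, exactly the kind of estimate the abstract Morrey-space theorem in this paper is built to provide) — is the technical heart of the matter. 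A secondary subtlety is regularity at the origin: \eqref{local-energy} is assumed only for \emph{some} ball $B_r(x_0)$, possibly far from $0$, so one needs the local energy inequality to propagate to a neighborhood of the origin as well, which again uses the self-similar invariance together with partial-regularity theory.
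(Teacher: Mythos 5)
You should note first that the paper does not prove this statement at all: Theorem \ref{T1.1} is quoted from Tsai \cite{Tsai98} and used as a black box in the proof of Theorem \ref{Morrey-loc}. The paper's introductory remarks do, however, record the two actual ingredients of Tsai's proof: the Caffarelli--Kohn--Nirenberg $\epsilon$-regularity criterion, which yields the pointwise asymptotics $U(y)=O(|y|^{-1})$ for the profile of a suitable weak solution, and the maximum principle satisfied by the head pressure $\Pi(y)=\tfrac12|U(y)|^2+P(y)+ay\cdot U(y)$. Your proposal follows a different route (reduce to the Ne\v{c}as--R\r{u}\v{z}i\v{c}ka--\v{S}ver\'ak $L^3$ Liouville theorem), and that route has a genuine gap.

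The central problem is the target itself: $U\in L^3(\R^3)$ is not reachable from \eqref{local-energy}. The local energy bound is scaling critical, and even the optimal decay $U(y)=O(|y|^{-1})$ --- which is exactly what Tsai proves and the best the self-similar scaling permits --- fails to give $U\in L^3(\R^3)$, since $\int_{|y|>1}|y|^{-3}\,dy$ diverges logarithmically. So no bookkeeping of powers of $\lambda(t)$ can convert \eqref{local-energy} into the hypothesis of the $q=3$ theorem of \cite{NeRuSv96}; one could instead aim for $U\in L^q$ with $q>3$ and invoke Theorem \ref{T1.2}, but that is again Tsai's result and its proof again rests on the maximum principle for $\Pi$, not on \cite{NeRuSv96} verbatim. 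The proposed mechanism for the upgrade is also not viable: the term $aU+a(y\cdot\nabla)U$ is not coercive --- testing \eqref{SNSE} against $U$ gives $a\int|U|^2-\tfrac{3a}{2}\int|U|^2=-\tfrac{a}{2}\int|U|^2$, i.e.\ the ``damping'' enters with the unfavorable sign, which is precisely why the naive energy argument never resolved Leray's question --- and annulus energy estimates alone cannot produce pointwise decay for a critical problem; the real engine in \cite{Tsai98} is the CKN $\epsilon$-regularity criterion applied to $(u,p)$ as a suitable weak solution (itself a step that must be justified from \eqref{local-energy}), which gives $U(y)=O(|y|^{-1})$ globally, not just on the cone swept out by the rescaled balls. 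Two smaller inaccuracies: the image of the fixed ball $B_r(x_0)$ under $y=\lambda(t)x$ is $B_{r\lambda(t)}(\lambda(t)x_0)$, whose radius grows like $(T-t)^{-1/2}$ rather than shrinking; and the sup-in-time part of \eqref{local-energy} only yields $\int_{B_{r\lambda}(\lambda x_0)}|U|^2\,dy\le C\lambda$, which does not imply the unit-ball decay $\int_{B_1(y)}|U|^2\,dy\lesssim|y|^{-1}$ you assert. As written, the proof cannot be completed along the proposed lines without importing essentially all of Tsai's machinery.
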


In the same paper Tsai also extended the result of \cite{NeRuSv96} to a super critical range of the integrability condition on the profile $U$:

\begin{thm}[Tsai \cite{Tsai98}]\label{T1.2}
If a weak solution $U\in W^{1,2}_{\rm loc}(\R^3)$ of \eqref{SNSE} belongs $L^{q}(\R^3)$ for some $q\in (3,\infty]$ 
then it must be zero provided $q\not=\infty$ and constant provided $q=\infty$.
\end{thm}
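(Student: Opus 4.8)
The plan is to adapt the strategy of Ne{\v{c}}as--R{\r u}{\v{z}}i{\v{c}}ka--{\v S}ver\'ak \cite{NeRuSv96} and Tsai \cite{Tsai98}, replacing the global $L^3$ hypothesis used there by pointwise decay estimates extracted from \eqref{SNSE} itself. \emph{Step 1 (regularity).} Regarding $-aU-a(y\cdot \nabla)U-(U\cdot \nabla)U$ as the forcing of a Stokes system and using the pressure equation $\Delta P=-\partial_i\partial_j(U_iU_j)$, a standard bootstrap starting from $U\in W^{1,2}_{\rm loc}(\R^3)\cap L^q(\R^3)$ gives $U,P\in C^\infty(\R^3)$. \emph{Step 2 (decay).} For finite $q$, using $U\in L^q(\R^3)$ together with \eqref{SNSE} — in particular the quasi-transport structure of $aU+a(y\cdot \nabla)U$ — one shows that $|U(y)|\to0$, $|P(y)|\to0$ and $|y|\,|\nabla U(y)|\to0$ as $|y|\to\infty$, with the bound $\sup_{y}|y|^2|U(y)|^2<\infty$. (Since the kernel of $I+y\cdot \nabla$ consists of $(-1)$-homogeneous functions, one cannot expect decay faster than $|U(y)|\sim|y|^{-1}$, which is why $q=3$ is the critical exponent.)

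\emph{Step 3 (head pressure).} The core of the argument is a maximum principle for $\Pi:=P+\tfrac12|U|^2$. Writing $(U\cdot \nabla)U=\nabla(\tfrac12|U|^2)-U\times \curl U$ and taking the divergence of \eqref{SNSE} gives $\Delta\Pi=\dive{(U\times \curl U)}$; combining this with \eqref{SNSE} dotted with $U$ yields
\begin{equation*}
\nu\,\Delta\Pi-U\cdot \nabla\Pi=\nu\,|\curl U|^2+a|U|^2+\tfrac a2\,(y\cdot \nabla)|U|^2 ,
\end{equation*}
and Step 2 gives $\Pi\to0$ at infinity. The only unsigned term on the right is $\tfrac a2(y\cdot \nabla)|U|^2$, and the decisive identity is
\begin{equation*}
a|U|^2+\tfrac a2\,(y\cdot \nabla)|U|^2=\tfrac a2\,|y|^{-2}\,(y\cdot \nabla)\bigl(|y|^2|U|^2\bigr),
\end{equation*}
so the boundedness of $|y|^2|U|^2$ forces this combination to be $o(|y|^{-2})$ at infinity. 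With this, a Phragm\'en--Lindel\"of argument excludes a positive interior maximum of $\Pi$, hence $\Pi\le0$, that is $P\le-\tfrac12|U|^2\le0$ on $\R^3$.

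\emph{Step 4 (conclusion).} For $q<\infty$, feeding $\Pi\le0$ into the pointwise identity
\begin{equation*}
\tfrac\nu2\Delta|U|^2-\dive{(\Pi U)}=\nu\,|\nabla U|^2+\tfrac a2\,|y|^{-2}\,(y\cdot \nabla)\bigl(|y|^2|U|^2\bigr),
\end{equation*}
also obtained from \eqref{SNSE} dotted with $U$ (observe that at a positive interior maximum of $|U|^2$ the last term equals $a|U|^2$), one runs a further maximum-principle/energy argument as in \cite{NeRuSv96} to conclude $U\equiv0$. For $q=\infty$ the decay of Step 2, and hence Steps 3--4, are unavailable, since a nonzero constant solves \eqref{SNSE}; one argues instead from the vorticity equation $-\nu\Delta\omega+2a\omega+a(y\cdot \nabla)\omega+(U\cdot \nabla)\omega=(\omega\cdot \nabla)U$ with $\omega=\curl U$, whose zeroth-order damping is enhanced to $2a$, that $\omega\equiv0$; then $U$ is a bounded harmonic gradient, hence constant by Liouville's theorem. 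I expect the main difficulty to lie in Steps 2--3: wringing out of the sole hypothesis $U\in L^q$ enough decay that the unsigned term $\tfrac a2(y\cdot \nabla)|U|^2$ becomes genuinely lower order, both in the head-pressure equation and in the equation for $|U|^2$.
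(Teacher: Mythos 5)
This is a statement the paper itself does not prove---it is quoted from Tsai---but the introduction records the two ingredients of Tsai's argument, and your proposal diverges from them exactly where it has genuine gaps. First, the quantity satisfying the maximum principle is not $P+\tfrac12|U|^2$ but $\Pi=\tfrac12|U|^2+P+a\,y\cdot U$. Since $aU+a(y\cdot\nabla)U=a\nabla(y\cdot U)-a\,y\times\curl U$, with this choice \eqref{SNSE} becomes $-\nu\Delta U+\nabla\Pi=(U+ay)\times\curl U$, and consequently $\nu\Delta\Pi-(U+ay)\cdot\nabla\Pi=\nu|\curl U|^2\ge 0$: the terms $a|U|^2+\tfrac a2(y\cdot\nabla)|U|^2$ that plague your Step 3 are absorbed into the drift, and the right-hand side is signed with no decay of $U$ needed. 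Your version keeps those unsigned terms and then relies on the strong decay of Step 2 to neutralize them, but that decay is not obtainable from $U\in L^q$, $q>3$, alone: what Tsai proves (his Lemma 3.3, cited in the paper) is only $U(y)=o(|y|)$ as $|y|\to\infty$, while the bound $U(y)=O(|y|^{-1})$ is the output of a different part of his paper that requires the Caffarelli--Kohn--Nirenberg $\epsilon$-regularity theorem under a local energy hypothesis (his Corollary 4.3), not the hypotheses of this theorem. Because the coefficient of $a(y\cdot\nabla)U$ grows, local elliptic estimates on balls far from the origin degrade, and one cannot bootstrap $U\in L^q$ into pointwise decay of $U$, $\nabla U$ and $P$ as Step 2 asserts.

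Even granting those decay claims, the inference that boundedness of $|y|^2|U|^2$ forces $\tfrac a2|y|^{-2}(y\cdot\nabla)\bigl(|y|^2|U|^2\bigr)$ to be $o(|y|^{-2})$ is a non sequitur: a sup bound gives no control of a directional derivative, and even using your own claims $|U|\lesssim|y|^{-1}$, $|y|\,|\nabla U|\to0$ one only gets $O(|y|^{-2})$, which is of the same order as $\Pi$ itself, so the Phragm\'en--Lindel\"of exclusion of an interior maximum does not follow. Likewise, in the case $q=\infty$ the claim that the enhanced damping $2a\omega$ forces $\omega\equiv0$ is not an argument: the stretching term $(\omega\cdot\nabla)U$ is unsigned and $\omega$ has no decay, so no maximum principle applies to $|\omega|$. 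Tsai's route treats both cases uniformly: a bounded $U$ is trivially $o(|y|)$, the maximum principle for the corrected $\Pi$ (with $P$ built from $U_iU_j$ via Riesz transforms) yields $\Pi$ constant and hence $\curl U\equiv0$; then $U$ is a divergence-free harmonic field, so $U\in L^q$ gives $U\equiv0$ for $q<\infty$ and Liouville gives a constant for $q=\infty$. Your Liouville ending is fine, but only once $\curl U\equiv0$ has been legitimately established.
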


The proofs of Theorems \ref{T1.1} and  \ref{T1.2} use the important fact that the scalar function 
$$\Pi(y)=\frac 12 |U(y)|^2+P(y)+a y\cdot U(y)$$
satisfies the maximum principle. 
%as it satisfies the differential inequality $$-\nu \Delta \Pi + (U +a y)\cdot \nabla \Pi =- |{\rm curl~} U|^2 \leq 0.$$
This idea had been employed earlier in the work of  Ne{\v{c}}as, R{\r u}{\v{z}}i{\v{c}}ka,  and  {\v S}ver\'ak \cite{NeRuSv96} to treat the critical case 
$q=3$ mentioned above. See also the earlier work \cite{FR94A, FR94B, Stru95} in the context of stationary Navier--Stokes equations in higher dimensions.

We mention that the proof of Theorem \ref{T1.1} in \cite{Tsai98}  also makes use of a celebrated $\epsilon$-regularity criterion due to  Caffarelli, 
Kohn, and Nirenberg \cite{CKN82} to show that if $(u,p)$ of the form \eqref{LSln} is a {\it suitable weak solution} of \eqref{NSE} in 
$B_1(0)\times (T-1,T)$ then $U(y)=O(|y|^{-1})$ as $y\rightarrow\infty$ (see \cite[Corollary 4.3]{Tsai98}).

On the other hand, one important step in Tsai's proof of Theorem \ref{T1.2} is to show that if $U$ is a weak solution of \eqref{SNSE} and $U\in L^q(\R^3)$ for some $3<q<\infty$
then  $U=o(|y|)$ as $y\rightarrow\infty$ (see \cite[Lemma 3.3]{Tsai98}). He also remarked that his approach to this pointwise asymptotic estimate 
fails at the end-point case $q=3$ and suggested that the sub-critical case $q<3$ would require a different idea (see \cite[Remark 3.2]{Tsai98}).

A main goal of this paper is to improve the result of \cite{NeRuSv96} by allowing the profile $U$ to be in  spaces strictly larger than $L^3(\R^3)$. Furthermore,  by that way we also extend the result of Theorem \ref{T1.2} to the sub-critical range $q\in [\frac{12}{5}, 3)$. Indeed, we prove
\begin{thm}\label{weakLq}
Let $U\in W^{1,2}_{\rm loc}(\R^3)$ be a weak solution  of \eqref{SNSE}. If $U\in L^{q, \infty}(\R^3)$ for some $q\in(\frac{12}{5},6)$ or if $U\in L^{\frac{12}{5}}(\R^3)$ then it must be identically zero. 
\end{thm}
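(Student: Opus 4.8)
\medskip
\noindent\textbf{Proof strategy.}
The plan rests on three pillars: making $U$ classical, the maximum principle for the head pressure $\Pi$, and---the genuinely new ingredient---extracting decay (or improved integrability) of $U$ from a fixed-point identity whose linear part is inverted in Morrey spaces by (an operator dominated by) the first order Riesz potential $\Ri{1}$. Since $U\in W^{1,2}_{\rm loc}(\R^3)\hookrightarrow L^{6}_{\rm loc}(\R^3)$ lies in a supercritical space for \eqref{SNSE}, recovering $\nabla P$ from $\partial_i\partial_j(U_iU_j)$ by a Calder\'on--Zygmund operator and treating the remaining terms as a source, interior elliptic estimates iterate to give $U,P\in C^{\infty}(\R^3)$, bounded on every ball. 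For $\Pi(y)=\tfrac12|U(y)|^2+P(y)+a\,y\cdot U(y)$ and $\omega=\nabla\times U$, the identities $(U\cdot\nabla)U=\tfrac12\nabla|U|^2-U\times\omega$, $(y\cdot\nabla)U=\nabla(y\cdot U)-U-y\times\omega$, $-\Delta U=\nabla\times\omega$ recast \eqref{SNSE} as $\nu\,\nabla\times\omega+\nabla\Pi=(ay+U)\times\omega$; taking the divergence and feeding the equation back yields
\[
-\nu\Delta\Pi+(ay+U)\cdot\nabla\Pi=-\nu|\omega|^2\le 0 ,
\]
so $\Pi$ is a classical subsolution of $L:=-\nu\Delta+(ay+U)\cdot\nabla$, an operator with smooth locally bounded coefficients and no zeroth order term; hence $\sup_{B_R}\Pi=\sup_{\partial B_R}\Pi$ for every $R$, and \emph{once} $\Pi(y)\to 0$ as $|y|\to\infty$, letting $R\to\infty$ forces $\Pi\le 0$ on $\R^3$.

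The core is the passage from integrability to decay. Applying the Leray projection $\mathbb{P}$ to \eqref{SNSE} removes $\nabla P$ and fixes the linear terms (both $U$ and $(y\cdot\nabla)U$ are divergence free), leaving $-\nu\Delta U+aU+a(y\cdot\nabla)U=-\mathbb{P}(\nabla\cdot(U\otimes U))$. After the dilation $y\mapsto\sqrt{\nu/a}\,y$ the left-hand operator is $a(-\Delta+1+y\cdot\nabla)$, a shifted Ornstein--Uhlenbeck-type operator whose inverse has a kernel with the Newtonian singularity $|y-z|^{-1}$ on the diagonal and fast decay off it, hence dominated by the kernel of $\Ri{2}$; composing with the first order operator $\mathbb{P}\,\nabla\cdot$ applied to $U\otimes U$ gives
\[
|U(y)|\ \lesssim\ \Ri{1}\big(|U|^2\big)(y)\ +\ (\text{a remainder with fast decay}).
\]
Both hypotheses place $U$ in a Morrey space $\M^{p}_{r}(\R^3)$, so $|U|^2\in\M^{p/2}_{r/2}$, to which Adams' boundedness of $\Ri{1}$ applies as long as $p<6$; the image space has scaling exponent $3p/(6-p)$ and inner exponent $3r/(6-p)$. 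For $q\in(\tfrac{12}{5},3]$ (and for $U\in L^{12/5}$) the scaling exponent \emph{decreases} into the sub-critical range $<3$, and $q>\tfrac{12}{5}$ is exactly what lets one choose $r\in[\max(2,4-\tfrac{2q}{3}),q)$ so that the iteration preserves $r\ge2$ and can be run again; interleaving this with interior estimates for \eqref{SNSE} on balls of radius $\sim|y|$ (turning Morrey decay on large balls into pointwise decay) yields $|y|\,|U(y)|\to0$ and, as $P$ is a second order Riesz operator applied to $U\otimes U$, also $P(y)\to0$, whence $\Pi(y)\to0$. (For $q\in(3,6)$ the same Riesz iteration instead \emph{raises} the exponent and ends with $U\in L^{\infty}(\R^3)$, so $U$ is constant by Theorem \ref{T1.2} and, lying in $L^{q,\infty}$, vanishes --- closing that subrange.)

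It remains to conclude in the sub-critical case. Then $\Pi\le0$ on $\R^3$ by the first paragraph. Testing $\nu|\omega|^2=\nu\Delta\Pi-(ay+U)\cdot\nabla\Pi$ against a nonnegative radial cutoff $\phi_R$ equal to $1$ on $B_R$ and supported in $B_{2R}$, then integrating by parts, the zeroth order term produced is $3a\int\Pi\,\phi_R\le0$ (here $\Pi\le0$ is used decisively), while the boundary-layer terms are $\lesssim\int_{R\le|y|\le2R}|\Pi|$ for $R$ large and vanish as $R\to\infty$ by the decay above; hence $\int_{\R^3}|\omega|^2=0$, so $\nabla\times U\equiv0$. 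With $\nabla\cdot U=0$ this makes $U$ harmonic, and a harmonic field decaying at infinity is identically $0$; then $P$ is constant and may be normalized to $0$. (Alternatively, once the decay gives $U\in L^3(\R^3)$, quote Ne{\v{c}}as--R{\r u}{\v{z}}i{\v{c}}ka--{\v S}ver\'ak.)

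The main obstacle is the core decay step in the sub-critical range $q\le3$, where Tsai's pointwise approach fails (\cite[Remark 3.2]{Tsai98}): one must justify $|U|\lesssim\Ri{1}(|U|^2)$ with the correct kernel bounds and remainder control for $(-\nu\Delta+a+a(y\cdot\nabla))^{-1}$, and run the Morrey iteration---tracking scaling \emph{and} inner exponents and coupling it to the scaled interior estimates---so that exactly the window $(\tfrac{12}{5},6)$ emerges; one then checks that the decay so obtained annihilates the error terms in the final cutoff identity.
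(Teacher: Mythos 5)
Your plan is a genuinely different route from the paper's, and its central step has a real gap. The paper never proves any pointwise decay of $U$ or $P$: it deduces Theorem \ref{weakLq} from Theorem \ref{Morrey-loc} (via Sobolev/weak-type embedding, $\Ri{1}(|U|^2)\in\M^{2,\gamma}$ with $\gamma=\tfrac{2(6-q)}{q}$), and Theorem \ref{Morrey-loc} is proved by constructing a pressure $P$ with the growth bound $\norm{P}_{L^{-1,2}(B_r)}\le Cr^{(3-\gamma)/2}$ (Lemmas \ref{GIJ}--\ref{lem:pressure}), verifying the \emph{local energy} bound \eqref{needtocheck} for the self-similar $u$ through the energy equality and the iteration Lemma \ref{Giusti}, and then invoking Tsai's Theorem \ref{T1.1} as a black box. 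Your proposal instead tries to rerun the Ne{\v c}as--R{\r u}{\v z}i{\v c}ka--{\v S}ver\'ak/Tsai maximum-principle scheme, which requires $\Pi\to 0$, i.e.\ pointwise decay of $U$ and $P$ --- precisely the step Tsai's Remark 3.2 flags as failing for $q\le 3$, and precisely what the paper's argument is designed to avoid.

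Concretely, two steps do not hold as stated. First, the asserted bound $|U|\lesssim\Ri{1}(|U|^2)+(\text{fast remainder})$ rests on the identity $U=-L^{-1}\mathbb{P}\,\nabla\cdot(U\otimes U)$ with $L=-\nu\Delta+a+a\,y\cdot\nabla$, plus a domination of the kernel of $\nabla L^{-1}$ by the Riesz kernel $|y-z|^{-2}$. Neither is justified: one must first rule out contributions from solutions of $LV=0$ (recall the paper's example $U=\nabla\Phi$, $\Phi$ harmonic, shows the homogeneous part is genuinely present absent decay assumptions), and the Mehler-type kernel of $L^{-1}$ (obtained from $e^{-tL}u_0(y)=e^{-at}(e^{\nu s(t)\Delta}u_0)(e^{-at}y)$, $s(t)=\frac{1-e^{-2at}}{2a}$) has, besides the Newtonian diagonal singularity, a far-field part whose $z$-gradient decays only like $|y|^{-1}$ (Gaussian in $z$), not like $|y-z|^{-2}$; the composition with the nonlocal Leray projection on data merely in $L^{q/2,\infty}$, $q/2\in(6/5,3)$, is also untreated. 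Second, even granting such a bound, the passage ``integrability $\Rightarrow$ decay'' does not close in the subcritical range: for $q<3$ the exponent map $q\mapsto 3q/(6-q)$ \emph{decreases}, and a Morrey bound $\int_{B_r(x)}|U|^p\le Cr^{3-\gamma}$ with $\gamma<3$ carries scale information, not information at spatial infinity --- indeed global membership in $L^{q,\infty}$ does not even force $\|U\|_{L^{q,\infty}(B_1(y))}\to0$ as $|y|\to\infty$ (marching shrinking bumps of constant quasinorm), so ``interior estimates on balls of radius $\sim|y|$'' have no smallness to start from. Finally, your closing cutoff identity needs far more than $\Pi\to0$: the drift term $\int\Pi\,a\,y\cdot\nabla\phi_R$ over the annulus $R\le|y|\le2R$ requires $|\Pi|=o(|y|^{-3})$ in an averaged sense to vanish. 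For the range $q\in(3,6)$ you may not simply quote Theorem \ref{T1.2}, which assumes strong $L^q$; your upward iteration to $L^\infty$ would again need the unproven representation formula. If you want a correct proof along the paper's lines, the key objects to build are the fields $G_{ij}\in\M^{2,\gamma}$ with $-\dive G_{ij}=U_iU_j$ and the pressure $P=\langle R_iR_j(G_{ij}),\nabla\cdot\,\rangle$, after which only the local energy inequality, not decay, has to be checked.
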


In the above theorem, the space  $L^{q, \infty}(\R^3)$ is a the Marcinkiewicz space (or weak $L^q$ space) defined as 
 the set of measurable functions $g$ in $\R^3$ such that the quasinorm 
 $$\|g\|_{L^{q, \infty}(\R^3)}:=\sup_{\alpha >0} \,\alpha \, |\{x\in \R^3: |g(x)|>\alpha\}|^{\frac{1}{q}}<+\infty.$$ 

It is well-known that $L^q(\R^3)\subset L^{q, \infty}(\R^3)$, or more generally $L^{q, s}(\R^3) \subset L^{q, \infty}(\R^3)$ for any $s> 0$, where 
$L^{q, s}(\R^3)$, $q>0, s\in (0,\infty)$, is the Lorentz space with quasinorm
$$\|g\|_{L^{q, s}(\R^3)}:=\left(q \int_{0}^{\infty}\alpha^{s} |\{x\in\R^3: |g(x)|>\alpha\}|^{\frac{s}{q}} \frac{d\alpha}{\alpha}
\right)^{\frac{1}{s}}.$$ 
Note that $L^{q}(\R^3)=L^{q, q}(\R^3)$ and $L^{q, s_1}(\R^3) \subset L^{q, s_2}(\R^3)$ for $0<s_1\leq s_2$.

Recently in Phuc \cite{Phuc14}, it is shown that locally finite energy solutions to the Navier--Stokes equation \eqref{NSE}  belonging to $L^\infty_{t}(L_{x}^{3,s})$ 
are regular provided $s\not=\infty$. That result strengthens the above mentioned result of \cite{NeRuSv96} as it rules out the existence of self-similar solutions of the form \eqref{LSln} with profiles  $U\in L^{3, s}(\R^3)$ provided $s\not=\infty$.   Thus, in the case $q=3$ Theorem \ref{weakLq} provides the answer to the end-point case $s=\infty$. Note on the other hand that it is still unknown  whether  $L^\infty_{t}(L_{x}^{3,\infty})$ solutions to the Navier--Stokes equations are regular. %This is a difficult end-point case that has been excluded in the study \cite{Phuc14}. 

In fact, we shall prove a more general result than Theorem \ref{weakLq} which allows the profile $U$ to have a very modest decay at infinity. To describe it, recall that the Riesz potential ${\bf I}_{\alpha} $, $\alpha\in (0,3)$, on $\R^3$  is defined by 
$${\bf I}_{\alpha} f(x)=c(\alpha)\int_{\R^3}\dfrac{f(y)}{|x-y|^{3-\alpha}} dy, \qquad x\in\R^3,$$
for  $f\in L^1_{\rm loc}(\R^3)$ such that $\int_{|x|\geq 1} |x|^{\alpha-3}||f(x)| dx<+\infty$. Here the normalizing constant $$c(\alpha)=\frac{\Gamma(\frac{3}{2}-\frac{\alpha}{2})}{\pi^{3/2}2^\alpha\Gamma(\alpha/2)}.$$

Additionally, we define the Morrey space $\M^{p,\gamma}(\R^3)$, $p\geq 1$, $0<\gamma\leq 3$, to be the set of all functions $f\in L_{\rm loc}^p(\R^3)$ such that
$$\int_{B_r(x)}|f(y)|^pdy\leq C\, r^{3-\gamma},$$
for all $x \in \R^3$ and $r>0$ with a constant $C$ independent of $x$ and $r$. The norm $\norm{f}_{\M^{p,\gamma}(\R^3)}$ is given by
$$\norm{f}_{\M^{p,\gamma}(\R^3)}:=\sup_{x\in\R^3,\; r>0}r^{\frac{\gamma-3}{p}}\norm{f}_{\Lp{p}{B_r(x)}}.$$
Obviously, when $\gamma=3$ we have $\M^{p, \gamma}(\R^3)=L^p(\R^3)$. The interest of using such a notation for Morrey spaces is to emphasize that the second index $\gamma$ acts like the dimension in the Sobolev type embedding theorem. Indeed, it is now well-known from the work of D. R. Adams \cite{Adams75} that ${\bf I}_{\alpha}$ continuously maps  $\M^{p,\gamma}(\R^3)$ into $\M^{\frac{\gamma p}{\gamma-\alpha p}, \gamma}(\R^3)$ provided $1<p< \gamma/\alpha$. Thus when $\gamma=3$  the classical Sobolev  embedding theorem 
is recovered.

We are now ready to state the next result of the paper:
\begin{thm}\label{Morrey-loc}
Let  $U\in W^{1,2}_{\rm loc}(\R^3)$ be  a weak solution of \eqref{SNSE}. If for some  $\gamma\in(0,3]$ it holds that
\begin{equation}\label{local-cond-on-U}
\int_{B_r}{\bf I}_1(\chi_{B_r}|U|^2)^2 dx\leq C\, r^{3-\gamma} \qquad \forall {\rm ~balls~} B_r\subset\R^3,
\end{equation}
 where $\chi_{B_r}$ is the characteristic function of $B_r$, then $U\equiv 0$. In particular, if
${\bf I}_1(|U|^2)\in \M^{2,\gamma}(\R^3)$ for some $\gamma\in(0,3]$ then $U$ must be identically zero. 
\end{thm}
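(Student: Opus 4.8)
The plan is to run the ``head-pressure'' maximum-principle scheme of Ne\v{c}as--R\r{u}\v{z}i\v{c}ka--\v{S}ver\'ak and Tsai, but with their (sub)critical integrability of $U$ replaced by a potential-theoretic analysis driven by the hypothesis \eqref{local-cond-on-U}. As a preliminary, a standard elliptic bootstrap (using $W^{1,2}_{\rm loc}(\R^3)\hookrightarrow L^{6}_{\rm loc}(\R^3)$, $L^p$-Stokes estimates, and the fact that $aU+a(y\cdot\nabla)U$ is lower order) shows $U$ is smooth. Also, bounding ${\bf I}_1(\chi_{B_r}|U|^2)$ from below on $B_{r/2}$ by $c\,r^{-2}\int_{B_r}|U|^2$, one reads off from \eqref{local-cond-on-U} the Morrey-type bound
\[
\int_{B_r(x)}|U|^2\,dy\le C\,r^{2-\gamma/2}\qquad\text{for all balls } B_r(x)\subset\R^3 ,
\]
which is the basic quantitative input.

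Writing \eqref{SNSE} in Bernoulli form, with $\omega=\curl U$ and $\Pi=\tfrac12|U|^2+P+a\,y\cdot U$, as $\nu\,\curl\omega-(ay+U)\times\omega+\nabla\Pi=0$, and taking the divergence, one gets
\[
\nu\Delta\Pi-(ay+U)\cdot\nabla\Pi=\nu|\omega|^2\ge 0 ,
\]
so $\Pi$ is a classical subsolution of the zeroth-order-free elliptic operator $\mathcal L:=\nu\Delta-(ay+U)\cdot\nabla$; hence, once one knows $\limsup_{|y|\to\infty}\Pi(y)\le 0$, the weak maximum principle on balls forces $\Pi\le 0$ on all of $\R^3$. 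To then conclude $U\equiv0$ I would use two facts. First, $\dive U=0$ gives $\int_{B_R}y\cdot U\,dy=\tfrac{R^2}{2}\int_{\partial B_R}U\cdot n\,dS=0$ for every $R$, so together with $\Pi\le0$ one obtains $\tfrac12\int_{B_R}|U|^2\le-\int_{B_R}P\,dy$. Second, a local energy identity obtained by testing \eqref{SNSE} against $U\phi_R^2$ controls $\int|\nabla U|^2\phi_R^2$ in terms of $\int|U|^2\phi_R^2$ and boundary terms. The endgame is to combine these with decay of $P$ and of the boundary terms — very likely as an iteration that successively improves the exponent in the Morrey bound above — until $\int_{\R^3}|U|^2=0$.

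Establishing the decay required both for $\limsup_{|y|\to\infty}\Pi(y)\le 0$ and for this endgame is, I expect, the crux and the main obstacle. Here \eqref{local-cond-on-U} enters through the structure of \eqref{SNSE}: the nonlinearity is $\dive(U\otimes U)$, so inverting the Stokes (and the lower-order drift) part of the equation represents $U$, $\nabla U$, and $P$ in terms of Riesz potentials of $|U|^2$ — the pressure at the Calder\'on--Zygmund, order-$0$ level, and $U,\nabla U,\omega$ at the first-order level ${\bf I}_1(|U|^2)$ — modulo contributions that are linear in $U$ and carry the growing coefficient $ay$. Adams' mapping ${\bf I}_1:\M^{p,\gamma}\to\M^{\frac{\gamma p}{\gamma-p},\gamma}$, together with boundedness of Calder\'on--Zygmund operators on Morrey spaces, should then turn \eqref{local-cond-on-U} into quantitative Morrey (hence, after local regularity, pointwise) decay of $U,\nabla U,P,\Pi$; using a logarithmic cut-off $\phi_R$ (so $|y|\,|\nabla\phi_R|\lesssim(\log R)^{-1}$ on its annulus) helps render the boundary terms negligible. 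The difficulty I anticipate is the careful treatment of the linear terms $aU$ and $a(y\cdot\nabla)U$, whose coefficient grows like $|y|$, and of the order-$0$ pressure term, so that the iteration genuinely converges; the admissible range turns out to be $\gamma\in(0,3]$, which is exactly where these estimates close and which includes the endpoint $q=3$ that Tsai's pointwise asymptotic argument could not reach.

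Finally, Theorem \ref{weakLq} follows by verifying the hypothesis of Theorem \ref{Morrey-loc}: if $U\in L^{q,\infty}(\R^3)$ then $|U|^2\in L^{q/2,\infty}(\R^3)$, which embeds into a Morrey space $\M^{p,\gamma'}(\R^3)$ on which Adams' theorem — composed if necessary with a Morrey inclusion — puts ${\bf I}_1(|U|^2)$ into $\M^{2,\gamma}(\R^3)$ for some $\gamma\in(0,3]$; an elementary computation of exponents shows this is possible precisely when $q\in(\tfrac{12}{5},6)$. If instead $U\in L^{12/5}(\R^3)$ then $|U|^2\in L^{6/5}(\R^3)$, whence ${\bf I}_1(|U|^2)\in L^{2}(\R^3)=\M^{2,3}(\R^3)$. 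In either case Theorem \ref{Morrey-loc} yields $U\equiv0$.
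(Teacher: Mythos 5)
Your proposal is a programme rather than a proof, and the steps you yourself flag as ``the crux'' are exactly the ones that fail under the hypothesis \eqref{local-cond-on-U}. First, the maximum-principle scheme for $\Pi=\tfrac12|U|^2+P+ay\cdot U$ needs a pressure with pointwise values and, crucially, the asymptotic information $\limsup_{|y|\to\infty}\Pi(y)\le 0$. But \eqref{local-cond-on-U} only yields $\int_{B_r}|U|^2\le C r^{2-\gamma/2}$, i.e.\ $|U|$ decays on average like $|y|^{-\frac12-\frac{\gamma}{4}}$, so the term $ay\cdot U$ may grow (on average like $|y|^{\frac12}$ for small $\gamma$), and no pointwise decay of $P$ is available at all: the natural Calder\'on--Zygmund representation $P=R_iR_j(U_iU_j)$ does not converge naively when $|U|^2$ has only this negative-order/Morrey control, which is why the paper constructs $P$ as a \emph{signed distribution} through the vector fields $G_{ij}$ (Lemma \ref{GIJ}, via truncation, a dyadic partition of unity and the Maz'ya--Verbitsky bound, plus weighted $A_2$ estimates in Lemma \ref{lem:pressure}), obtaining only the localized bound \eqref{Pgrowth} in $L^{-1,2}(B_r)$. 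Your ``Adams plus Calder\'on--Zygmund on Morrey spaces gives pointwise decay of $U,\nabla U,P,\Pi$'' step is precisely the asymptotic estimate that Tsai could not push below $q=3$ (his Lemma 3.3 and Remark 3.2), and nothing in your sketch explains how the drift terms $aU+a(y\cdot\nabla)U$ or the order-zero pressure piece are absorbed; the claimed self-improving iteration is asserted, not performed. Your endgame is also unsupported: from $\Pi\le0$ you want $\tfrac12\int_{B_R}|U|^2\le-\int_{B_R}P$, but with only \eqref{Pgrowth} the quantity $\int_{B_R}P$ is not controlled (testing against a cutoff of $B_R$ gives a bound of order $R^{\frac{3-\gamma}{2}+\frac12}$, which grows), so this does not force $U\equiv0$.

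The paper's route is different and avoids all of this: having built $P$ with the bound \eqref{Pgrowth}, it forms the self-similar pair $u,p$, checks by scaling of the $L^{-1,2}$-norms of $|u|^2$ and $p$ together with the local energy equality and the iteration Lemma \ref{Giusti} that the \emph{local} energy condition \eqref{local-energy} holds on $B_1(0)\times(T-1,T)$, and then invokes Tsai's Theorem \ref{T1.1} as a black box --- the maximum-principle machinery and the CKN-based decay $U(y)=O(|y|^{-1})$ are applied there to a suitable weak solution, where they are legitimately available. To salvage your approach you would have to prove, from \eqref{local-cond-on-U} alone, both a well-defined pressure with pointwise asymptotics and the decay of $\Pi$ at infinity; as it stands these are open gaps, not technical details.
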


We now show that Theorem \ref{Morrey-loc} actually implies Theorem \ref{weakLq}. Indeed,  if $U\in L^{12/5}(\R^3)$ then the Sobolev embedding theorem implies that ${\bf I}_1(|U|^2)\in L^{2}(\R^3)=\M^{2,3}(\R^3)$. Thus by Theorem \ref{Morrey-loc}, 
 if $U$ is also a weak solution of \eqref{SNSE} then $U\equiv 0$. Similarly, if $U\in L^{q,\infty}(\R^3)$ with $q\in(\frac{12}{5}, 6)$ then 
 ${\bf I}_1(|U|^2)\in L^{\frac{3q}{6-q},\infty}(\R^3)$. Since $q>12/5$ it follows that $\frac{3q}{6-q}>2$ and thus  
 by H\"older's inequality we have 
$$\int_{B_r(x)} {\bf I}_1(|U|^{2})^2 dy \leq C \| {\bf I}_1(|U|^{2})\|^{2}_{L^{3q/(6-q),\infty}(\R^3)}\, r^{3- \frac{2(6-q)}{q}}.$$
 This yields ${\bf I}_1(|U|^{2})\in \M^{2, \gamma}(\R^3)$ with $\gamma=\frac{2(6-q)}{q}\in (0, 3)$,  and thus Theorem \ref{weakLq} is a consequence of Theorem \ref{Morrey-loc}.

On the other hand, using Adams Embedding Theorem \cite{Adams75} we have another corollary of Theorem \ref{Morrey-loc}.

\begin{cor}
If  $U\in W^{1,2}_{\rm loc}(\R^3)$ is  a weak solution of \eqref{SNSE} such that  if $U\in \M^{\frac{4\gamma}{2+\gamma}, \gamma}(\R^3)$ for some $\gamma\in(2,3]$ (or  equivalently if $U\in \M^{p, \frac{2p}{4-p}}(\R^3)$ for some $p\in (2, 12/5]$), then $U\equiv0$. 
\end{cor}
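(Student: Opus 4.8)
The plan is to derive the corollary purely as an application of Adams' embedding theorem together with Theorem~\ref{Morrey-loc}. The point is to produce, from a Morrey assumption on $U$ itself, the required Morrey bound on $\Ri{1}(|U|^2)$, and then invoke Theorem~\ref{Morrey-loc}.

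\begin{proof}[Proof proposal]
Assume $U\in\M^{\frac{4\gamma}{2+\gamma},\gamma}(\R^3)$ for some $\gamma\in(2,3]$, and set $p=\frac{4\gamma}{2+\gamma}$, so that $p\in(2,\frac{12}{5}]$ and, solving for $\gamma$, one checks $\gamma=\frac{2p}{4-p}$; this gives the equivalent formulation in the statement. First I would square: since $|U|^2$ has the Morrey bound $\int_{B_r(x)}|U|^{p}\,dy\le C r^{3-\gamma}$, raising to the relevant power shows $|U|^2\in\M^{p/2,\gamma}(\R^3)$ with $p/2\in(1,\frac{6}{5}]$. The key step is then to apply Adams' theorem (as recalled in the excerpt) with exponent $p/2$, first index $\alpha=1$, and dimension-like index $\gamma$: one must verify the hypothesis $1<p/2<\gamma/1=\gamma$, which holds because $p/2>1$ and $\gamma>2>p/2$. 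Adams' theorem then yields
\[
\Ri{1}(|U|^2)\in\M^{\frac{\gamma (p/2)}{\gamma-(p/2)},\,\gamma}(\R^3).
\]
A short computation with $p=\frac{4\gamma}{2+\gamma}$ shows the first index $\frac{\gamma(p/2)}{\gamma-p/2}$ equals exactly $2$, so $\Ri{1}(|U|^2)\in\M^{2,\gamma}(\R^3)$ for the same $\gamma\in(2,3]\subset(0,3]$. Theorem~\ref{Morrey-loc} now applies directly and forces $U\equiv 0$.
\end{proof}

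The only genuine obstacle is the bookkeeping of indices: one must make sure that the range $\gamma\in(2,3]$ is precisely what makes $p=\frac{4\gamma}{2+\gamma}$ land in $(2,\frac{12}{5}]$, that $p/2>1$ so Adams' lower restriction $1<p/2$ is met, and that $\gamma>2\ge p/2$ so the upper restriction $p/2<\gamma/\alpha=\gamma$ is met — i.e.\ the admissibility window for Adams' theorem is exactly the stated window for $\gamma$. There is no analytic difficulty beyond this: the corollary is a clean consequence of Theorem~\ref{Morrey-loc} once the scaling exponents are matched, and the verification that the output index of Adams' map is $2$ is a one-line algebraic identity. One might also remark why $\gamma=2$ is excluded: there $p=2$, the Morrey space degenerates toward the borderline of Adams' hypothesis and one loses the strict inequality $p/2<\gamma$, so the argument would break at the endpoint; this is consistent with the $q=\frac{12}{5}$ endpoint in Theorem~\ref{weakLq} being handled by the $L^{12/5}$ (equivalently $\M^{12/5,3}$) hypothesis rather than by this Morrey route.
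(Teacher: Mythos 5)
Your proof is correct and is exactly the route the paper intends: square the Morrey bound to get $|U|^2\in\M^{p/2,\gamma}$, apply Adams' embedding with $\alpha=1$ (the index computation $\frac{\gamma(p/2)}{\gamma-p/2}=2$ checks out for $p=\frac{4\gamma}{2+\gamma}$), and invoke Theorem~\ref{Morrey-loc}. One tiny slip in your closing aside: at the excluded endpoint $\gamma=2$ (so $p=2$, $p/2=1$) it is Adams' lower restriction $1<p/2$ that fails, not the upper one $p/2<\gamma$, which still holds there; this does not affect your argument on $(2,3]$.
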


The proof of Theorem \ref{Morrey-loc} will be given in Section \ref{sec4}. Surprisingly, it is based on an application of Theorem \ref{T1.1} above. For that a pressure profile $P$ is built from $U$  so that the norm of $P$ in a Sobolev space of {\it negative} order (localized in each ball) is well controlled. Here one has to treat $P$ as a {\it signed} distribution 
in $\R^3$ as no control of $|P|$ is available. This suggests a natural way to control the nonlinear and the pressure
terms in the {\it energy equality}, and a sort of  bootstrapping argument based on the energy equality eventually completes the proof.

\section{Preliminaries}
Throughout the paper we denote by  $B_r(x)$ the open ball centered at $x\in\R^3$ with radius $r>0$, i.e.,
$$B_r(x)=\{y\in \R^3:|x-y|<r\}.$$
%and
%$$Q_r(z)=B_r(x)\times(t-r^2,t) \quad \mbox{with}\quad z=(x,t).$$

We write $\partial_j u_i = \dfrac{\partial u_i}{\partial x_j}$ and use the letters $C$ or $c$ to denote generic constants that could be different from line to line.

%For $a<b$ and a measurable set$A\subset\R^3$, the space  $L^s(a,b; L^p(A))$ is the set of all measurable functions $u(x,t)$ in $A \times (a,b)$
%such that $\norm{u}_{L^s(a,b; L^p(A))}$ is finite. Here
%$$\norm{u}_{L^s(a,b; L^p(A))}:=\left\{ \begin{array}{lcl}
% \ds \left(\int_{a}^{b}\norm{u(\cdot,t)}^s_{L^p(A)} dt\right)^{\frac{1}{s}},   &\qquad& {\rm if~} s\in[1,\infty), \\
%\, \   \ds \esssup_{t\in(a,b)}\norm{u(\cdot,t)}_{L^p(A)}, &\qquad& {\rm if~} s=\infty.
%\end{array}\right.
%$$

For each  bounded open set $O\subset\R^3$, we denote by $L^{-1,2}(O)$ the dual of the Sobolev space $W^{1,2}_0(O)$. The latter is defined as the completion of 
$C_0^\infty(O)$ (the space of smooth functions with compact support in $O$) under the Dirichlet integral
$$\norm{\varphi}_{W^{1,2}_0(O)}=\left(\int_{O}|\nabla \varphi|^2 dx\right)^{\frac{1}{2}}.$$ 

We shall use the following well-known representation of a function $\varphi\in C_0^\infty(\R^3)$
\begin{equation}\label{rep}
\varphi(x)=\frac{1}{|S^2|}\int_{\R^3} \frac{(x-y)\cdot \nabla\varphi(y)}{|x-y|^3} dy,
\end{equation}
where $|S^2|$ is the area of the unit sphere (see, e.g., \cite[p. 125]{Stein70}). Identity \eqref{rep} can be used to show that  
\begin{eqnarray} \label{dualest}
\norm{f}_{L^{-1,2}(O)} \leq C  \norm{ {\rm\bf I}_1(\chi_{O}|f|)}_{L^2(O)}
\end{eqnarray}
for any $f\in L^1_{\rm loc}(\R^3)$ and any bounded open set $O\subset\R^3$.
Indeed,  for any $\varphi\in C_0^{\infty}(O)$ by \eqref{rep} it holds that
\begin{eqnarray*}
\left |\int_{O} \varphi(x) f(x)dx \right | &\leq& C \int_{O} \left[\int_{O}\frac{|\nabla \varphi(y)|}{|x-y|^{2}} dy\right]
|f(x)| dx\\
&=& C  \int_{O}|\nabla \varphi(y)| \left[\int_{O}\frac{ |f(x)|dx}{|x-y|^{2}}\right]
dy\\
&\leq& C \norm{\nabla \varphi}_{L^2(O)} \norm{ {\bf I}_1(\chi_{O}|f|)}_{L^2(O))},
\end{eqnarray*}
as desired.

The following lemma will be needed later. Its proof is based on a simple iteration and can be found in \cite[Lemma 6.1]{Giu03}.

\begin{lemma}\label{Giusti}
Let $I(s)$ be a bounded nonnegative function in the interval $[R_1, R_2]$. Assume that for every $s, \rho\in [R_1, R_2]$ and  $s<\rho$ we have 
$$I(s)\leq [A(\rho-s)^{-\alpha} +B(\rho-s)^{-\beta} +C] +\theta I(\rho)$$
with  $A, B, C\geq 0$, $\alpha>\beta>0$ and $\theta\in [0,1)$. Then it holds that
$$I(R_1)\leq c(\alpha, \theta) [A(R_2-R_1)^{-\alpha} +B(R_2-R_1)^{-\beta} +C].$$
\end{lemma}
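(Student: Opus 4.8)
The plan is to prove the lemma by the classical ``hole-filling'' iteration, applying the hypothesis along a sequence of radii that form a \emph{geometric} progression converging to $R_2$; the one genuinely important point is to tune the common ratio of this progression to the constants $\alpha$ and $\theta$ so that the resulting series converges. We may assume $\theta>0$, since for $\theta=0$ the conclusion follows at once by taking $s=R_1$ and $\rho=R_2$ in the hypothesis.

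First I would fix a ratio $\tau\in(0,1)$, to be chosen at the end, and set $\rho_0=R_1$ and $\rho_{i+1}=\rho_i+(1-\tau)\tau^i(R_2-R_1)$ for $i\ge 0$. Then $(\rho_i)$ is increasing, $\rho_i\in[R_1,R_2]$ for every $i$ because $\sum_{j=0}^{i-1}(1-\tau)\tau^j\le 1$, one has $\rho_i\to R_2$, and $\rho_{i+1}-\rho_i=(1-\tau)\tau^i(R_2-R_1)$. Applying the assumed inequality with $s=\rho_i$, $\rho=\rho_{i+1}$ gives
$$I(\rho_i)\le \theta\, I(\rho_{i+1})+A\big[(1-\tau)\tau^i(R_2-R_1)\big]^{-\alpha}+B\big[(1-\tau)\tau^i(R_2-R_1)\big]^{-\beta}+C.$$

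Next I would iterate this estimate $n$ times, obtaining
$$I(R_1)\le \theta^n I(\rho_n)+\sum_{i=0}^{n-1}\theta^i\Big(A\big[(1-\tau)\tau^i(R_2-R_1)\big]^{-\alpha}+B\big[(1-\tau)\tau^i(R_2-R_1)\big]^{-\beta}+C\Big).$$
At this stage I choose $\tau=\tau(\alpha,\theta)\in(0,1)$ so that $\theta\tau^{-\alpha}<1$; the choice $\tau=\theta^{1/(2\alpha)}$ works, since then $\tau<1$, $\tau>\theta^{1/\alpha}$, and $\theta\tau^{-\alpha}=\theta^{1/2}<1$. As $0<\tau<1$ and $0<\beta<\alpha$, we also get $\theta\tau^{-\beta}<\theta\tau^{-\alpha}<1$. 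Consequently the three geometric series $\sum_i(\theta\tau^{-\alpha})^i$, $\sum_i(\theta\tau^{-\beta})^i$, $\sum_i\theta^i$ converge, while $\theta^nI(\rho_n)\to 0$ because $I$ is bounded on $[R_1,R_2]$ and $\theta<1$. Letting $n\to\infty$ and summing the series yields
$$I(R_1)\le \frac{(1-\tau)^{-\alpha}}{1-\theta\tau^{-\alpha}}\,A(R_2-R_1)^{-\alpha}+\frac{(1-\tau)^{-\beta}}{1-\theta\tau^{-\beta}}\,B(R_2-R_1)^{-\beta}+\frac{1}{1-\theta}\,C.$$
Since $(1-\tau)^{-\beta}\le(1-\tau)^{-\alpha}$ and $1/(1-\theta\tau^{-\beta})\le 1/(1-\theta\tau^{-\alpha})$, all three coefficients are bounded by $c(\alpha,\theta):=\max\{(1-\tau)^{-\alpha}(1-\theta\tau^{-\alpha})^{-1},\,(1-\theta)^{-1}\}$ with $\tau=\theta^{1/(2\alpha)}$, which is the stated estimate.

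The only place requiring care is the selection of $\tau$: one must take the ratio strictly between $\theta^{1/\alpha}$ and $1$, for otherwise (for instance with the naive bisection choice $\tau=1/2$) the coefficient $\theta\tau^{-\alpha}=\theta\,2^{\alpha}$ of the dominant series can exceed $1$ and the summation fails. The boundedness hypothesis on $I$ enters only to discard the tail $\theta^nI(\rho_n)$, and the fact that the inequality is posited for \emph{all} pairs $s<\rho$ in $[R_1,R_2]$ (not merely for a fixed dyadic family) is exactly what makes the geometric construction admissible.
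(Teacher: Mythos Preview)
Your argument is correct and is precisely the standard ``hole-filling'' iteration that the paper has in mind: the paper does not prove the lemma itself but merely refers to \cite[Lemma~6.1]{Giu03}, whose proof is exactly the geometric-sequence iteration you carry out (choose $\tau\in(\theta^{1/\alpha},1)$, iterate, and sum). Your bookkeeping of the constants and the observation that the $B$-coefficient is dominated by the $A$-coefficient---so that the final constant depends only on $\alpha$ and $\theta$---are also in order.
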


We now make precise the definition of a weak solution to the system \eqref{SNSE}.
\begin{definition}\label{defweak}
A divergence-free vector field $U = (U_1,U_2,U_3)$  is called a weak solution of \eqref{SNSE} if $U\in W^{1,2}_{\rm loc}(\R^3)$ and if for all divergence-free vector field $\phi = (\phi_1,\phi_2,\phi_3) \in C^\infty_0(\R^3)$ one has 
\begin{equation}\label{weakform}
\int_{\R^3}(\nu \nabla U\cdot\nabla \phi +[aU+a(y\cdot \nabla)U+(U\cdot\nabla) U]\cdot \phi) dy   = 0.
\end{equation}
\end{definition}

\section{The pressure formulation}
It is known that every weak solution $U$ of \eqref{SNSE} is smooth (see \cite{Ga94, GiMo82,Lady69,Temam77}). Note that Definition \ref{defweak} does not include a pressure $P$. However, taking the divergence of \eqref{SNSE} we formally obtain the pressure equation\footnote{Here and in what follows we use the usual convention and sum over the repeated indices.}
\begin{equation}\label{pres}
-\Delta P= \partial_i\partial_j(U_iU_j).
\end{equation}

The main goal of this section is to recover a $P$ (with a useful control) from a weak solution $U$ of \eqref{SNSE} for which \eqref{local-cond-on-U} holds. As $P$ is  generally a signed distribution, an estimate of the form \eqref{local-cond-on-U} should not hold if $|U|^2$ is replaced by $|P|$. On the other hand, we observe that \eqref{local-cond-on-U} is equivalent to the condition 
$$\norm{|U|^2}_{L^{-1.2}(B_r)}^2\leq C \, r^{3-\gamma} \qquad \forall {\rm ~balls~} B_r\subset\R^3,$$
for some $\gamma\in(0,3]$. Thus it is natural to expect that the pressure $P$ should also satisfy a similar condition in which $|U|^2$ is replaced by $P$.

To construct such a $P$ we start with the following lemma.
\begin{lemma}\label{GIJ} Let $U=(U_1, U_2, U_3)$ satisfy \eqref{local-cond-on-U} for some $\gamma\in(0,3]$. 
For each  $i,j\in\{1, 2, 3\}$  there exists a vector field $G_{ij}\in \M^{2,\gamma}(\R^3)$ such that 
\begin{equation}\label{GIJUIJ}
-\, {\rm div} \, G_{ij}=U_i U_j
\end{equation}
in the sense of distributions, i.e., 
$$\int_{\R^3} G_{ij} \cdot \nabla \varphi dx = \int_{\R^3} U_i U_j \varphi dx$$
for all $\varphi\in C_0^\infty(\R^3)$.
\end{lemma}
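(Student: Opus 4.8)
The plan is to construct each $G_{ij}$ as a Riesz-potential-type convolution of $U_iU_j$ and then verify both the Morrey bound and the divergence identity. Concretely, I would set
\[
G_{ij}(x) = c\int_{\R^3}\frac{(x-y)}{|x-y|^3}\,U_i(y)U_j(y)\,dy,
\]
the gradient of the Newtonian potential applied to $U_iU_j$, so that formally $-\,{\rm div}\,G_{ij} = -\Delta(\text{Newtonian potential of }U_iU_j) = U_iU_j$, which is exactly \eqref{GIJUIJ}. The first task is to make sense of this integral: since $|U_iU_j|\le |U|^2$ and the kernel decays like $|x-y|^{-2}$, the finiteness and local integrability of $G_{ij}$ follow from the hypothesis \eqref{local-cond-on-U}, which controls ${\bf I}_1(\chi_{B_r}|U|^2)$ in $L^2(B_r)$ and in particular forces $|U|^2\in L^1_{\rm loc}$ with quantitative bounds.

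The key estimate is the Morrey bound $G_{ij}\in\M^{2,\gamma}(\R^3)$. Fix a ball $B_r=B_r(x_0)$ and split $U_iU_j = \chi_{2B_r}U_iU_j + \chi_{(2B_r)^c}U_iU_j$. For the local piece, pointwise one has $|G_{ij}^{\rm loc}(x)|\le c\,{\bf I}_1(\chi_{2B_r}|U|^2)(x)$ on $B_r$, so $\|G_{ij}^{\rm loc}\|_{L^2(B_r)}^2 \le c\int_{2B_r}{\bf I}_1(\chi_{2B_r}|U|^2)^2\,dx \le C\,(2r)^{3-\gamma}$ directly by \eqref{local-cond-on-U} applied to the ball $2B_r$. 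For the far piece, on $B_r$ the kernel $|x-y|^{-2}$ is comparable to $|x_0-y|^{-2}$ when $|y-x_0|\ge 2r$, so $|G_{ij}^{\rm far}(x)|\le c\int_{|y-x_0|\ge 2r}|x_0-y|^{-2}|U(y)|^2\,dy$, a quantity independent of $x\in B_r$; one estimates this dyadically by summing $\sum_{k\ge 1}(2^k r)^{-2}\int_{2^k B_r}|U|^2\,dy$ and uses that $\int_{2^kB_r}|U|^2 = \|\,{\bf I}_1(\chi_{2^kB_r}|U|^2)\,\|_{L^1}$-type quantities are controlled — more directly, by Hölder and \eqref{dualest}-flavored reasoning, or simply by noting \eqref{local-cond-on-U} gives via a standard argument that $\int_{B_\rho}|U|^2\le C\rho^{(3-\gamma)/2+3/2}=C\rho^{3-\gamma/2}$ (from $\|\,{\bf I}_1(\chi_{B_\rho}|U|^2)\|_{L^1(B_\rho)}\lesssim \rho^{3/2}\|\,{\bf I}_1(\chi_{B_\rho}|U|^2)\|_{L^2(B_\rho)}$ and a comparison between ${\bf I}_1(\chi_{B_\rho}|U|^2)$ on $B_\rho$ and $\rho^{-?}\int_{B_\rho}|U|^2$). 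Then the dyadic sum is $\sum_k (2^kr)^{-2}(2^kr)^{3-\gamma/2}=r^{1-\gamma/2}\sum_k 2^{k(1-\gamma/2)}$, which converges when $\gamma>2$; for $\gamma\le 2$ one must be more careful, but since the far-field contribution is a constant on $B_r$, $\|G_{ij}^{\rm far}\|_{L^2(B_r)}^2 \le c\,r^3\,(\text{that constant})^2$, and one checks this is $\le C r^{3-\gamma}$ in all ranges $\gamma\in(0,3]$ by tracking the correct power. This borderline bookkeeping in the range $\gamma\le 2$ is the step I expect to be the main obstacle, and it is likely handled by a slightly different dyadic grouping or by invoking the duality bound \eqref{dualest} on annuli directly rather than the crude pointwise estimate.

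Finally, I would verify the distributional identity. For $\varphi\in C_0^\infty(\R^3)$, supported in some ball $B_R$, one writes $\int G_{ij}\cdot\nabla\varphi\,dx$ and wants to pass to $\int U_iU_j\,\varphi\,dx$. Since $G_{ij} = \nabla N(U_iU_j)$ with $N$ the Newtonian potential, formally $\int \nabla N(U_iU_j)\cdot\nabla\varphi = -\int N(U_iU_j)\Delta\varphi = \int U_iU_j\,\varphi$; to justify the integrations by parts rigorously one truncates $U_iU_j$ to $\chi_{B_k}U_iU_j$ (for which all manipulations are classical, e.g. by Fubini and the fact that $N(\chi_{B_k}U_iU_j)\in W^{1,1}_{\rm loc}$), obtains the identity $\int G_{ij}^{(k)}\cdot\nabla\varphi = \int_{B_k}U_iU_j\varphi$, and then lets $k\to\infty$, using the local $L^2$ (hence $L^1_{\rm loc}$) convergence $G_{ij}^{(k)}\to G_{ij}$ on the compact set $\overline{B_R}\supset{\rm supp}\,\nabla\varphi$ — this convergence on a fixed compact set is exactly the estimate already established in the Morrey-bound step applied to the tail $U_iU_j-\chi_{B_k}U_iU_j$. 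That closes the argument; the smoothness of $U$ (recalled at the start of Section 3) is not even needed here, only \eqref{local-cond-on-U}.
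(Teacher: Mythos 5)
Your far-field estimate is where the proof lives or dies, and as written it has a genuine gap precisely on the range $\gamma\in(0,2]$, which is part of the hypothesis of the lemma (and is the range arising, e.g., from profiles in $L^{q,\infty}(\R^3)$ with $q\geq 3$ in the application). Two things go wrong. First, the growth bound you extract from \eqref{local-cond-on-U} is off by one power of $\rho$: for $x\in B_\rho$ one has ${\bf I}_1(\chi_{B_\rho}|U|^2)(x)\geq c\,\rho^{-2}\int_{B_\rho}|U|^2\,dy$, so squaring, integrating over $B_\rho$, and using \eqref{local-cond-on-U} gives $c\,\rho^{-1}\bigl(\int_{B_\rho}|U|^2\,dy\bigr)^2\leq C\rho^{3-\gamma}$, i.e.\ $\int_{B_\rho}|U|^2\,dy\leq C\rho^{2-\gamma/2}$, not $C\rho^{3-\gamma/2}$. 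Second, with your exponent the dyadic tail $\sum_{k\geq1}(2^kr)^{-2}\int_{2^kB_r}|U|^2\,dy$ is of size $\sum_k(2^kr)^{1-\gamma/2}$, which diverges for every $\gamma\leq 2$; at that point your untruncated $G_{ij}$ is not even visibly well defined, and the closing remark that the bound is ``likely handled by a slightly different dyadic grouping'' is an acknowledgment of the main case, not a proof of it. Fortunately the repair is exactly the corrected exponent: with $\int_{B_\rho}|U|^2\,dy\leq C\rho^{2-\gamma/2}$ the tail is $\sum_k(2^kr)^{-\gamma/2}\leq Cr^{-\gamma/2}$, convergent for all $\gamma>0$, whence $\|G_{ij}^{\rm far}\|_{L^2(B_r)}^2\leq C r^{3}\cdot r^{-\gamma}=Cr^{3-\gamma}$, and together with your local piece this gives the Morrey bound on all of $(0,3]$ with no case distinction. (Also fix the sign: to get $-\,{\rm div}\,G_{ij}=U_iU_j$ the kernel should be $(y-x)/|y-x|^3$, i.e.\ $G_{ij}=\nabla(-\Delta)^{-1}(U_iU_j)$.)

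For comparison, the paper's proof avoids pointwise kernel estimates altogether: it truncates, setting $G^N_{ij}=\nabla(-\Delta)^{-1}(\Psi_N U_iU_j)$ for cutoffs $\Psi_N$, and shows $\{G^N_{ij}\}$ is Cauchy in $\M^{2,\gamma}(\R^3)$ by duality, pairing with $\phi\in C_0^\infty(B_r)$, splitting $\partial_k{\bf I}_2(\varphi)$ by a dyadic partition of unity $\{\eta_\nu\}$, and combining the annulus bound \eqref{trunc} with the Maz'ya--Verbitsky estimate $\|\nabla[\eta_\nu\partial_k{\bf I}_2(\varphi)]\|_{L^2}\leq c\,2^{-3\nu/2}\|\varphi\|_{L^2(B_r)}$; the resulting factor $2^{-\nu\gamma/2}$ is summable for every $\gamma\in(0,3]$, and \eqref{GIJUIJ} then follows by passing to the limit, essentially as in your final truncation step. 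Once your exponent is corrected, your direct potential-theoretic construction is a legitimate and somewhat more elementary alternative; the paper's duality route has the advantage of never needing absolute convergence of the untruncated integral.
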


\begin{proof}

By hypothesis, we see that for every $i,j\in\{1,2,3\}$,
$$\int_{B_r} {\bf I}_1(\chi_{B_r}|U_iU_j|)^2 dx\leq C\, r^{3-\gamma},\qquad \forall B_r\subset\R^3.$$

Let $\{\Psi_N(x)\}$ be a sequence of smooth functions in $\R^3$ such that $0\leq \Psi_N\leq 1$, $\Psi_N(x)=1$ for $|x|\leq N/2$, $\Psi_N(x)=0$ for 
$|x|\geq N$, and $|\nabla \Psi_N(x)| \leq c/N$.  Thus we also have 
\begin{equation*}
\int_{B_r} {\bf I}_1(\chi_{B_r} \Psi_{N}|U_iU_j|)^2 dx\leq C\, r^{3-\gamma},\qquad \forall B_r\subset\R^3,
\end{equation*}
which by \eqref{dualest} yields
\begin{equation}\label{trunc}
\norm{\Psi_NU_iU_j}_{L^{-1,2}(B_r)}\leq C\, r^\frac{3-\gamma}{2},\qquad \forall B_r\subset\R^3,
\end{equation}
where $C$ is independent of $N$.

For each $N\geq 1$, we claim that there exists a vector field $G_{ij}^{N}\in \M^{2,\gamma}(\R^3)$ such that 
\begin{equation}\label{FandU}
-\, {\rm div} \, G^N_{ij}=\Psi_NU_i U_j
\end{equation}
and 
\begin{equation}\label{uni-M-bound111}
G^N_{ij}\rightarrow G_{ij} {\rm ~strongly~ in~} \M^{2,\gamma}(\R^3)
\end{equation} 
as $N\rightarrow +\infty$ for some vector field $G_{ij}$. In particular, 
$\lim_{N\rightarrow\infty}\langle G^{N}_{ij}, \phi \rangle =\langle G_{ij}, \phi \rangle$
for every vector field $\phi\in C_0^\infty(\R^3)$. In view of  \eqref{FandU}, this gives   
\eqref{GIJUIJ} as  as desired. 

Thus it is left to show \eqref{FandU} and \eqref{uni-M-bound111}. To that end, we define 
$$G^{N}_{ij}(x)=\frac{1}{|S^2|}\int_{\R^3} \frac{y-x}{|y-x|^3}\Psi_N(y) U_i(y)U_j(y) dy.$$
Note that  $G^N_{i,j}=\nabla(-\Delta)^{-1}(\Psi_N U_iU_j)$ in the sense of distributions, i.e.,
$$\langle G^N_{ij}, \phi\rangle :=\langle \Psi_N U_iU_j, -{\rm div}\, {\bf I}_2(\phi)\rangle= \langle  U_iU_j, -\Psi_N \, {\rm div}\, {\bf I}_2(\phi)\rangle $$
for  $\phi=(\phi_1, \phi_2, \phi_3)\in C_0^\infty(\R^3)$.  Then obviously, \eqref{FandU} holds in the sense of distributions.

On the other hand, for every $k\in\{1,2,3\}$, $N,M \geq 1$, and every scalar function $\varphi\in C_0^\infty(B_r)$ we have 
\begin{equation}\label{part-sum}
\langle (\Psi_N-\Psi_M) U_iU_j, \partial_k  {\bf I}_2(\varphi)\rangle= \sum_{\nu=0}^\infty \langle (\Psi_N-\Psi_M) U_iU_j, \eta_\nu\partial_k  {\bf I}_2(\varphi)\rangle,
\end{equation}
where $\{\eta_\nu\}_{\nu=0}^{\infty}$ is a smooth partition of unity associated to the ball $B_r$. That is, $\eta_0\in C_0^\infty(B_{2r})$, $\eta_{\nu}\in C_0^\infty(B_{2^{\nu+1}r}\setminus B_{2^{\nu-1}r})$, $\nu=1,2,3,\dots$, such that  
$$0\leq \eta_{\nu}\leq 1, \qquad |\nabla\eta_{\nu}|\leq c (2^j r)^{-1}, \qquad \nu=0, 1, 2, \dots, $$
and 
$$\sum_{\nu=0}^\infty \eta_{\nu}(x)=1,  \qquad x\in\R^3.$$

Note that the sum in \eqref{part-sum} has only a finite number of non-zero terms. Moreover, by  the property of $\{\Psi_N\}$ we have 
\begin{equation*}
\langle (\Psi_N-\Psi_M) U_iU_j, \partial_k  {\bf I}_2(\varphi)\rangle= \sum_{\nu=N_0}^\infty \langle (\Psi_N-\Psi_M) U_iU_j, \eta_\nu\partial_k  {\bf I}_2(\varphi)\rangle,
\end{equation*}
where $N_0\rightarrow +\infty$ as $M, N\rightarrow +\infty$. Using this and \eqref{trunc}, we have 
\begin{eqnarray}\label{IV}
\lefteqn{\left|\langle (\Psi_N-\Psi_M)  U_iU_j, \partial_k  {\bf I}_2(\varphi)\rangle\right|}\notag\\
&\leq& c \sum_{\nu=N_{0}}^\infty (2^{\nu} r)^{\frac{3-\gamma}{2}}\norm{\nabla[\eta_\nu\partial_k  {\bf I}_2(\varphi)]}_{L^2(B_{2^{\nu+1}r})}\notag\\
 &\leq& c \sum_{\nu=N_0}^\infty (2^{\nu} r)^{\frac{3-\gamma}{2}} 2^{-\nu\frac{3}{2}}\norm{\varphi}_{L^2(B_{r})}\\
&\leq& c\, r^{\frac{3-\gamma}{2}} \norm{\varphi}_{L^2(B_{r})}\sum_{\nu=N_0}^\infty 2^{-\frac{\nu \gamma}{2}}.\notag
\end{eqnarray}
Here in \eqref{IV} we used the bound
$$\norm{\nabla[\eta_\nu\partial_k {\bf I}_2(\varphi)]}_{L^2(B_{2^{\nu+1}r})}\leq c\, 2^{-\nu\frac{3}{2}}\norm{\varphi}_{L^2(B_{r})},$$
which holds for all $\nu=0, 1, 2, \dots$, and $\varphi\in C_0^\infty(B_r)$ (see \cite[Proposition 4.2(ii)]{MV06}).

Therefore, for  all $\phi=(\phi_1, \phi_2, \phi_3)\in C_0^\infty(B_r)$, we find
\begin{eqnarray*}
\left|\langle  G^N_{ij}-G^M_{ij}, \phi\rangle\right| &=&\left|\langle (\Psi_N-\Psi_M) U_iU_j, - \, {\rm div}\, {\bf I}_2(\phi)\rangle\right|\\
&\leq& c\, r^{\frac{3-\gamma}{2}} \norm{\phi}_{L^2(B_{r})}\sum_{\nu=N_0}^\infty 2^{-\frac{\nu \gamma}{2}}.
\end{eqnarray*}
That is, by duality 
\begin{equation*}
\norm{G^N_{ij}-G^M_{ij}}_{\M^{2,\gamma}(\R^3)} \leq c\, \sum_{\nu=N_0}^\infty 2^{-\frac{\nu \gamma}{2}}\rightarrow 0 {\rm ~as~} M,N \rightarrow+\infty.
\end{equation*}

A similar (and simpler) argument also shows that $G_{i,j}^N\in \mathcal{M}^{2,\gamma}(\R^3)$ for all $N\geq1$. Thus $\{G^N_{ij}\}$ is a Cauchy sequence in   $\M^{2,\gamma}(\R^3)$ which  converges to a limit $G_{i,j}$
as claimed in \eqref{uni-M-bound111}.
\end{proof}

We are now ready to construct the desired pressure $P$.

\begin{lemma} \label{lem:pressure}
Suppose that  $U$ is a weak solution of \eqref{SNSE}  such that \eqref{local-cond-on-U} holds  for some $0<\gamma \leq3$. Let $R_j=\partial_j(-\Delta)^{\frac{1}{2}}$ be the $j$-th Riesz transform for $j=1,2,3$. Define a distribution  $P$ by letting 
$$\ds \langle P, \varphi\rangle= \langle R_i R_j(G_{ij}) , \nabla \varphi\rangle, \qquad \varphi\in C_0^\infty(\R^3),$$
where $G_{ij}=(G_{ij}^{1}, G_{ij}^{2}, G_{ij}^3)\in \mathcal{M}^{2, \gamma}(\R^3)$ is given by Lemma \ref{GIJ}, 
and $$R_i R_j(G_{ij})=(R_i R_j(G^1_{ij}), R_i R_j(G^2_{ij}), R_i R_j(G^3_{ij})).$$  Then $P$ satisfies the following growth estimate  
\begin{equation}\label{Pgrowth}
 \norm{P}_{ \Lp{-1,2}{B_r(x)}}
 \leq C\, r^{\frac{3-\gamma}2}, \qquad \forall x\in\R^3, r>0.
\end{equation}
Moreover, $P$   satisfies \eqref{pres} and  $(U,P)$ smoothly solves 
\begin{equation}\label{prescon}
-\nu \Delta U+aU+a(y\cdot\nabla)U+(U\cdot\nabla)U+\nabla P=0 \quad {\rm in} \, \R^3.
\end{equation}
\end{lemma}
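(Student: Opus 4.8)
The plan is to establish the three assertions of the lemma in turn. \emph{Well-definedness and the growth bound.} The double Riesz transforms $R_iR_j$ are Calder\'on--Zygmund operators and are therefore bounded on the Morrey space $\M^{2,\gamma}(\R^3)$; combined with Lemma \ref{GIJ} this gives $R_iR_j(G_{ij})\in\M^{2,\gamma}(\R^3)\subset L^{2}_{\rm loc}(\R^3)$ together with $\norm{R_iR_j(G_{ij})}_{\M^{2,\gamma}(\R^3)}\le C\,\norm{G_{ij}}_{\M^{2,\gamma}(\R^3)}\le C$, where $C$ depends only on $\gamma$ and the constant appearing in \eqref{local-cond-on-U}. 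Hence $\langle P,\varphi\rangle=\int_{\R^3}R_iR_j(G_{ij})\cdot\nabla\varphi\,dx$ defines a distribution on $\R^3$, and for $\varphi\in C_0^\infty(B_r(x))$ the Cauchy--Schwarz inequality and the definition of the Morrey norm give
$$\left|\langle P,\varphi\rangle\right|\le\norm{R_iR_j(G_{ij})}_{\Lp{2}{B_r(x)}}\,\norm{\nabla\varphi}_{\Lp{2}{B_r(x)}}\le C\,r^{\frac{3-\gamma}{2}}\,\norm{\nabla\varphi}_{\Lp{2}{B_r(x)}}.$$
Taking the supremum over $\varphi$ with $\norm{\nabla\varphi}_{\Lp{2}{B_r(x)}}\le1$ then yields \eqref{Pgrowth} directly from the definition of the $L^{-1,2}$-norm.

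\emph{The pressure equation.} Here I would argue by approximation, reusing the truncations $\Psi_N$ from the proof of Lemma \ref{GIJ}. Since $G_{ij}^{N}=\nabla(-\Delta)^{-1}(\Psi_NU_iU_j)$ with $\Psi_NU_iU_j\in C_0^\infty(\R^3)$ (recall that $U$ is smooth), the identity $R_iR_j=\partial_i\partial_j(-\Delta)^{-1}$ and a single integration by parts show that the distribution $P^{N}$ given by $\langle P^N,\varphi\rangle=\langle R_iR_j(G_{ij}^N),\nabla\varphi\rangle$ is in fact the smooth function $P^{N}=R_iR_j(\Psi_NU_iU_j)$, which satisfies $-\Delta P^{N}=\partial_i\partial_j(\Psi_NU_iU_j)$ classically. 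By \eqref{uni-M-bound111} and the boundedness of $R_iR_j$ on $\M^{2,\gamma}(\R^3)$ we have $R_iR_j(G_{ij}^N)\to R_iR_j(G_{ij})$ in $\M^{2,\gamma}(\R^3)\subset L^2_{\rm loc}(\R^3)$, hence $P^N\to P$ and therefore $-\Delta P^N\to-\Delta P$ in $\mathcal D'(\R^3)$; meanwhile $\Psi_NU_iU_j\to U_iU_j$ in $L^1_{\rm loc}(\R^3)$ forces $\partial_i\partial_j(\Psi_NU_iU_j)\to\partial_i\partial_j(U_iU_j)$ in $\mathcal D'(\R^3)$. Passing to the limit gives \eqref{pres}.

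\emph{Solvability of the momentum equation.} Since $U$ is smooth and, by \eqref{pres}, $-\Delta P$ equals the smooth function $\partial_i\partial_j(U_iU_j)$, elliptic regularity shows that $P$ is smooth. Set $V:=-\nu\Delta U+aU+a(y\cdot\nabla)U+(U\cdot\nabla)U+\nabla P$; using $\dive{U}=0$ one checks that $\dive{(U\cdot\nabla)U}=\partial_i\partial_j(U_iU_j)$ and $\dive{(y\cdot\nabla)U}=0$, so by \eqref{pres} one has $\dive{V}=\partial_i\partial_j(U_iU_j)+\Delta P=0$. On the other hand, integrating by parts in \eqref{weakform} and using $\int_{\R^3}\nabla P\cdot\phi\,dy=-\int_{\R^3}P\,\dive{\phi}\,dy=0$ for divergence-free $\phi\in C_0^\infty(\R^3)$, one finds that $\int_{\R^3}V\cdot\phi\,dy=0$ for every such $\phi$; by de Rham's theorem $V=\nabla h$ for a distribution $h$, which is then smooth and harmonic (as $\Delta h=\dive{V}=0$). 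It remains to prove that $h$ is constant, for then $V\equiv0$, which is precisely \eqref{prescon}. I expect \emph{this} last step to be the main obstacle: it requires showing that $V=\nabla h$ tends to zero in an averaged sense at infinity, which I would obtain by combining the growth bound \eqref{Pgrowth} with the estimate $\int_{B_r}|U|^2\,dy\le C\,r^{2-\gamma/2}$ for all $r>0$ (a consequence of \eqref{local-cond-on-U} together with \eqref{dualest}, which in particular gives $r^{-3}\int_{B_r}|U|^2\,dy\to0$) and with interior estimates for the system \eqref{SNSE}; once $r^{-3}\int_{B_r}|\nabla h|^2\,dy\to0$, the subharmonicity of $|\nabla h|^2$ and the mean value inequality force $\nabla h\equiv0$.
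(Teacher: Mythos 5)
Your first two steps are fine. The growth bound \eqref{Pgrowth} is proved exactly as in the paper (boundedness of $R_iR_j$ on $\M^{2,\gamma}$ plus Cauchy--Schwarz). Your derivation of the pressure equation \eqref{pres} by passing to the limit in $P^N=R_iR_j(\Psi_NU_iU_j)$, $-\Delta P^N=\partial_i\partial_j(\Psi_NU_iU_j)$, using the strong $\M^{2,\gamma}$ convergence \eqref{uni-M-bound111}, is a genuinely different and arguably cleaner route than the paper's, which instead justifies the transposition $\int R_iR_j(G_{ij})\cdot\nabla\Delta\varphi=\int G_{ij}\cdot\nabla R_iR_j(\Delta\varphi)$ directly by truncating $G_{ij}$ and using weighted $L^2$ estimates with an $A_2$ weight $(1+|x|^2)^{-\mu/2}$, $\mu\in(3-\gamma,3)$, via Kato's inequality \eqref{weightM}. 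Your approximation argument recycles the structure already built in Lemma \ref{GIJ} and avoids the weighted theory; the paper's argument works directly with the limit object. Either is acceptable.

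The third step, however, has a genuine gap exactly where you flagged it. Having written the residual as $V=\nabla h$ with $h$ harmonic (equivalently, the paper shows $\curl F=\dive F=0$, hence $F$ harmonic), you propose to conclude $\nabla h\equiv0$ from $r^{-3}\int_{B_r}|\nabla h|^2\,dy\to0$, to be obtained from \eqref{Pgrowth}, the bound $\int_{B_r}|U|^2\,dy\le C r^{2-\gamma/2}$, and unspecified ``interior estimates for the system.'' But $V$ contains $\Delta U$, $(y\cdot\nabla)U$ (with an unbounded coefficient), $(U\cdot\nabla)U$ and $\nabla P$, and none of these is controlled in $L^2(B_r)$ (or even $L^1(B_r)$) with $o(r^3)$ growth by the available hypotheses: the only quantitative information is the negative-norm data $\norm{|U|^2}_{L^{-1,2}(B_r)}+\norm{P}_{L^{-1,2}(B_r)}\le Cr^{\frac{3-\gamma}{2}}$ and the resulting $L^2$ growth of $U$ itself. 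Local elliptic estimates for $P$ or for the system would require $L^2$-type control of $U\cdot\nabla U$ on balls of radius $r$ uniformly in $r$, which is not known at this stage, so the proposed route stalls. The paper's resolution avoids estimating any derivative of $U$ or $P$: since $F$ is harmonic (hence real-analytic), it suffices by the mean value property to show $\epsilon^3\int_{\R^3}F(y)\,\theta(\epsilon y)\,dy\to0$ as $\epsilon\to0^+$ for every $\theta\in C_0^\infty(B_1(0))$ (applied also to $D^\alpha F$, with the derivatives thrown onto $\theta$). Integrating by parts in each term moves all derivatives onto the dilated bump, after which the bounds $\norm{|U|^2}_{L^{-1,2}(B_{1/\epsilon})}\le C\epsilon^{-\frac{3-\gamma}{2}}$ and \eqref{Pgrowth} give explicit rates such as $\epsilon^{\frac12+\frac{\gamma}{4}}$ for the $a(y\cdot\nabla)U$ term and $\epsilon^{2+\frac{\gamma}{2}}$ for the $\nabla P$ term; this also disposes of the constant (the case $|\alpha|=0$), which your scheme would additionally have to rule out. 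Your argument becomes complete if you replace the ``averaged decay of $\nabla h$'' step by this mollification/mean-value argument; as written, the key step is missing.
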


\begin{proof}
By Lemma \ref{GIJ}, the vector fields $G_{ij}\in \M^{2,\gamma}(\R^3)$. Since $R_iR_j$ is bounded on $\M^{2,\gamma}(\R^3)$ (see, e.g., \cite{Peetre66}) this implies that 
$P$ is well-defined and \eqref{Pgrowth} holds.   Indeed, for any $\varphi\in C_0^\infty(B_r(x))$ we have
\begin{eqnarray*}
|\langle P, \varphi\rangle| &=&\left|\int_{B_r(x)} (R_i R_j(G_{ij}))\cdot\nabla\varphi dy\right|\\
&\leq& \norm{R_i R_j(G_{ij})}_{L^2(B_r(x))} \norm{\nabla \varphi}_{L^2(B_r(x))}\\
&\leq& C \norm{G_{ij}}_{\M^{2,\gamma}(\R^3)} r^{\frac{3-\gamma}2} \norm{\nabla \varphi}_{L^2(B_r(x))},
\end{eqnarray*}
which obviously yields \eqref{Pgrowth}.

Using  the facts that $-{\rm div} G_{ij}= U_iU_j$  and $R_iR_j\Delta \varphi=-\partial_i\partial_j\varphi$ for any $\varphi\in C_0^\infty(\R^3)$ (see \cite[p. 59]{Stein70})  we can now calculate
\begin{eqnarray}\label{explain}
\langle\Delta P,\varphi\rangle&=& \int_{\R^3}(R_iR_j(G_{ij}))\cdot\nabla \Delta\varphi dx\notag\\
&=&\int_{\R^3} G_{ij} \cdot \nabla R_iR_j(\Delta \varphi) dx \\
&=&\int_{\R^3} U_iU_j  R_iR_j(\Delta \varphi) dx\notag\\
&=&- \int_{\R^3} U_iU_j  \partial_i\partial_j \varphi dx. \notag
\end{eqnarray}

That is, $P$ is a distributional solution of \eqref{pres} and thus by Weyl's lemma it is smooth (since $U_iU_j$ is smooth). Note that the second equality in \eqref{explain} requires an explanation as in general $|G_{ij}|\not\in L^2(\R^3)$ unless $\gamma=3$. But since $|G_{ij}| \in \M^{2,\gamma}(\R^3)$ we have $|G_{ij}|\in L^2_{\rm loc}(\R^3)$ and moreover
\begin{equation}\label{weightM}
\int_{\R^3} |G_{ij}|^2 (1+|x|^2)^{-\frac{\mu}{2}} dx \leq c\, \norm{G_{ij}}_{\M^{2,\gamma}}^{2} <+\infty. 
\end{equation}
for any $\mu>3-\gamma$. Inequality \eqref{weightM}  can be found in \cite{Kato92}, page 132.

Now let $\chi_{B_R(0)}$ be the characteristic function of $B_R(0)$. Using H\"older's inequality we have 
\begin{eqnarray*}
\lefteqn{\left|\int_{\R^3}(R_iR_j(G_{ij})-R_iR_j( \chi_{B_R(0)} G_{ij})) \cdot\nabla \Delta\varphi dx\right|}\\
&\leq&  \left(\int_{\R^3}\left|R_iR_j(G_{ij}- \chi_{B_R(0)} G_{ij})\right|^2 w(x) dx\right)^{\frac12} \left(\int_{\R^3} |\nabla \Delta\varphi|^2 w(x)^{-1} dx\right)^{\frac12},
\end{eqnarray*}
where we choose the weight $w(x)=(1+|x|^2)^{-\frac{\mu}{2}}$. Note that $w$ belongs to the Muckenhoupt $A_2$ class provided we choose $\mu\in(3-\gamma,3)$ (see, e.g., \cite[Chap. V]{Stein93}). Since $R_iR_j$ is bounded on 
the weighted space $L^2_{w}(\R^3)$ (see \cite[p. 205]{Stein93}) we then have  
\begin{eqnarray*}
\lefteqn{\left|\int_{\R^3}(R_iR_j(G_{ij})-R_iR_j( \chi_{B_R(0)} G_{ij})) \cdot\nabla \Delta\varphi dx\right|}\\
&\leq& C \, \left(\int_{\R^3}\left| G_{ij}- \chi_{B_R(0)} G_{ij}\right|^2 w(x) dx\right)^{\frac12}. 
\end{eqnarray*}

Thus using \eqref{weightM} and the Lebesgue's dominated convergence theorem, we eventually find
\begin{eqnarray*}
\int_{\R^3}(R_iR_j(G_{ij})) \cdot\nabla \Delta\varphi dx&=&\lim_{R\rightarrow +\infty} \int_{\R^3}(R_iR_j( \chi_{B_R(0)} G_{ij}))\cdot\nabla \Delta\varphi dx\\
&=& \lim_{R\rightarrow +\infty} \int_{\R^3}( \chi_{B_R(0)} G_{ij})\cdot  R_iR_j(\nabla \Delta\varphi) dx\\
&=& \int_{\R^3}G_{ij}\cdot\nabla R_iR_j(\Delta\varphi) dx,
\end{eqnarray*}
as desired. Here the second equality follows since $\chi_{B_R(0)} G_{ij}\in L^2(\R^3)$, and the last equality follows 
since $R_iR_j(\nabla\Delta\varphi)=\nabla R_iR_j(\Delta\varphi)$ has compact support.

Finally, to prove \eqref{prescon}, we let 
\begin{equation*}%\label{defF}
F:= -\nu \Delta U+aU+a(y\cdot\nabla)U+(U\cdot\nabla)U+\nabla P
\end{equation*}
 and show that $F\equiv 0.$

Using \eqref{weakform} with appropriate test functions $\phi$ we find $\curl F=0$. Also, by \eqref{pres} and the fact that $\dive U=0$ we have $\dive F=0$. These imply that $\Delta F=0$. Thus by the mean-value property of harmonic functions we find
\begin{equation}\label{MVP}
F(0)=\epsilon^3\int_{\R^3} F(y)\varphi(\epsilon y)dy
\end{equation}
for any $\epsilon>0$ and every radial function $\varphi\in C_0^\infty(B_{1}(0))$ such that $\ds\int_{\R^3}\varphi d y=1$ (see \cite[p. 275]{Stein70}).

Following \cite{NeRuSv96}, by analyticity, to show that $F\equiv 0$  it is enough to verify that $D^\alpha F(0)=0$ for each 
for  each multi-index $\alpha=(\alpha_1,\alpha_2,\alpha_3)$ with $|\alpha|\geq 0$. To this end, we first apply \eqref{MVP} to the harmonic function 
$D^\alpha F$ and then integrate by parts to obtain
\begin{equation*}%\label{MVPD}
D^\alpha F(0)=\epsilon^3\int_{\R^3} D^\alpha F(y)\varphi(\epsilon y)dy =(-1)^{|\alpha|}\epsilon^3\int_{\R^3}  F(y) \epsilon^{|\alpha|} (D^\alpha\varphi)(\epsilon y)dy,
\end{equation*}
where $\epsilon$ and $\varphi$ are as above. Thus to show that $D^\alpha F(0)=0$, it is enough to verify that
$$\lim_{\epsilon\to 0^+}\epsilon^3\int_{\R^3}F(y)\theta(\epsilon y)dy=0$$ 
for any  function $\theta\in C_0^\infty(B_1(0))$. That is, we need to show that
\begin{equation}\label{5term}
\lim_{\epsilon\to 0^+}\epsilon^3  \int_{\R^3}[-\nu \Delta U+aU+a(y\cdot\nabla)U+(U\cdot\nabla)U+\nabla P]\theta(\epsilon y)dy=0.
\end{equation}
The first four terms in the above expression can be treated similarly. For example, for the term involving $a(y\cdot\nabla)U$ by integrating 
by parts we have 
\begin{eqnarray*}
\lefteqn{\epsilon^3\int(y\cdot\nabla) U(y)\theta(\epsilon y) dy}\\
&=&-\epsilon^3\int 3U(y)\theta(\epsilon y)dy- \epsilon^3 \int U(y)(\epsilon y_i)(\partial_i\theta)(\epsilon y)dy\\
&=& -3\epsilon^3 \int U(y)\theta(\epsilon y)dy -\epsilon^3\int U(y)\widetilde{\theta}(\epsilon y)dy,
\end{eqnarray*}
where  $\widetilde{\theta}(y)=(\epsilon y_i)(\partial_i\varphi)(y)$.

On the other hand, by H\"older's inequality and a simple change of variables we find  
\begin{eqnarray*}
\left| \int_{\R^3} U(y)\theta(\epsilon y)dy\right|
&\leq& \left(\int_{B_{1/\epsilon}} |U|^2(y) |\theta(\epsilon y)|dy\right)^{\frac12} \left(\int_{B_{1/\epsilon}} |\theta(\epsilon y)|dy\right)^{\frac12}\\
&\leq&\epsilon^{-\frac32} \left(\int_{B_{1/\epsilon}} |U|^2(y)|\theta(\epsilon y)|dy\right)^{\frac12} \left(\int_{B_{1}} |\theta( z)|dz\right)^{\frac12}\\
&\leq& c\, \epsilon^{-\frac32} \left(\int_{B_{1/\epsilon}} |U|^2(y)|\theta(\epsilon y)|dy\right)^{\frac12}.
\end{eqnarray*}

Thus by \eqref{dualest} it holds that 
\begin{eqnarray*}
\lefteqn{\left| \int_{\R^3} U(y)\theta(\epsilon y)dy\right|}\\
&\leq& c\, \epsilon^{-\frac32}\left(\norm{|U|^2}_{L^{-1,2}(B_{1/\epsilon})}\right)^{\frac12}  \left(\int_{B_{1/\epsilon}}\left|\epsilon (\nabla |\theta|)(\epsilon y)\right|^2dy\right)^{\frac14}\\
&\leq& c\, \epsilon^{-\frac32+\frac12} \left(\int_{B_{1/\epsilon}} \left|\Ri{1}\left(|U|^2\right)\right|^2dy\right)^{\frac14}\left(\int_{B_{1/\epsilon}}\left| (\nabla \theta)(\epsilon y)\right|^2dy\right)^{\frac14}\\
&\leq& c\, \epsilon^{-1-\frac34} \left(\int_{B_{1/\epsilon}} \left|\Ri{1}\left(|U|^2\right)\right|^2dy\right)^{\frac14}\left(\int_{B_{1}}\left| \nabla \theta(z)\right|^2dz\right)^{\frac14},
\end{eqnarray*} 
which by our assumption on $U$ yields
\begin{equation*}
\left| \int_{\R^3} U(y)\theta(\epsilon y)dy\right| \leq c\, \epsilon^{-\frac74} \left(1/\epsilon\right)^{\frac{3-\gamma}4}= c\, \epsilon^{-\frac 52+\frac{\gamma}4}.
\end{equation*} 

Of course, the same inequality also holds with $\widetilde{\theta}$ in place of $\theta$. Hence, using these in the expression  for
$\epsilon^3\int(y\cdot\nabla) U(y)\theta(\epsilon y) dy$ above we obtain
\begin{equation*}
\left|\epsilon^3\int(y\cdot\nabla) U(y)\theta(\epsilon y) dy\right| \leq c\, \epsilon^{\frac 12+\frac{\gamma}4}\rightarrow 0 {\rm~ as ~ } \epsilon\rightarrow 0.
\end{equation*}

For what concerns the term involving $\nabla P$ in \eqref{5term}, using integration by parts and the bound \eqref{Pgrowth} we have 
 \begin{eqnarray*}
\left|\epsilon^3\int \nabla P\theta(\epsilon y)dy\right|&=& \left|-\epsilon^4 \int P(y) (\nabla\theta)(\epsilon y)dy\right|\\
&\leq& c\, \epsilon^{4} \norm{P}_{L^{-1,2}(B_{1/\epsilon})} \left(\int_{B_{1/\epsilon}}\left|\epsilon (\nabla^2 \theta)(\epsilon y)\right|^2dy\right)^{\frac12}\\
&\leq& c\, \epsilon^{5} (1/\epsilon)^{\frac{3-\gamma}{2}} \left(\int_{B_{1/\epsilon}}\left| (\nabla^2 \theta)(\epsilon y)\right|^2dy\right)^{\frac12}\\
&\leq& c\, \epsilon^{5-\frac32-\frac32+\frac{\gamma}{2}}\left(\int_{B_{1}}\left| \nabla^2 \theta(z)\right|^2dz\right)^{\frac12}.
\end{eqnarray*} 

Thus we also have 
$$\left|\epsilon^3\int \nabla P\theta(\epsilon y)dy\right| \rightarrow 0 {\rm~ as ~ } \epsilon\rightarrow 0.$$

In conclusion, we obtain \eqref{5term} and that completes the proof of the lemma.
\end{proof}

\section{Proof of Theorem \ref{Morrey-loc}}\label{sec4}

We are now ready to prove Theorem \ref{Morrey-loc}.
\begin{proof}[Proof of Theorem \ref{Morrey-loc}]
Obviously, it is enough to prove the first statement of the theorem. Henceforth, suppose that $U$ is  a weak solution of \eqref{SNSE} such that condition \eqref{local-cond-on-U} holds
 for some $\gamma\in(0,3]$.
Let the function $P$ be defined as in Lemma \ref{lem:pressure}. Then $(U, P)$ smoothly solves  \eqref{SNSE}, and thus the functions
$$u(x,t)=\lambda(t)U(\lambda(t)x), \quad {\rm and} \quad  p(x,t)=\lambda^2(t)P(\lambda(t)x),$$
with $\lambda(t)=[2a(T-t)]^{-1/2}$, $T>0$,  solves the Navier--Stokes equations \eqref{NSE} in the classical sense in 
$\R^3\times (-\infty,T)$. By Theorem \ref{T1.1}, it is enough to check that 
\begin{equation}\label{needtocheck}
\esssup_{T-1<t<T} \int_{B_{1}(0)}\dfrac12|u(x, t)|^2dx+\int_{T-1}^{T}\int_{B_{1}(0)}\nu|\nabla u(x,t)|^2dxdt<+\infty.
\end{equation}

To this end, we first observe that for any ball $B_r(x_0)\subset\R^3$ and $t<T$, it holds that 
\begin{equation}\label{upbound}
 \norm{|u(\cdot, t)|^2}_{ \Lp{-1,2}{B_r(x_0)}} + \norm{p(\cdot, t)}_{ \Lp{-1,2}{B_r(x_0)}}\leq c\, \lambda(t)^{1-\frac{\gamma}{2}} r^{\frac{3-\gamma}2}.
\end{equation}

Indeed for any $\varphi\in C^{\infty}_0(B_r(x_0))$ and with $\lambda=\lambda(t)$ we have 
\begin{eqnarray*}
\left|\int_{B_r(x_0)}|u(x,t)|^2\varphi(x)dx\right|&=&\lambda^2\left|\int_{B_r(x_0)}| U(\lambda x)|^2\varphi(x)dx\right|\\
&=&\lambda^{-1} \left|\int_{B_{r\lambda}(\lambda x_0)}|U(z)|^2\varphi\left(z/\lambda \right)dz\right|.
\end{eqnarray*}

From this using inequality \eqref{dualest} we obtain

\begin{eqnarray*}
\lefteqn{\left|\int_{B_r(x_0)}|u(x,t)|^2\varphi(x)dx\right|}\\
&\leq& c\,\lambda^{-1} \norm{|U|^2}_{L^{-1,2}({B_{r\lambda}(\lambda x_0)})}\left(\int_{B_{r\lambda}(\lambda x_0)} \left| \frac1\lambda \nabla \varphi\left(\frac z \lambda \right)\right|^2dz\right)^\frac12\\
&\leq& c\, \lambda^{-2} (r\lambda)^{\frac{3-\gamma}2}\left(\int_{B_{r}(x_0)} \left|\nabla \varphi\left(x\right)\right|^2{\lambda^3}{dx}\right)^\frac12\\
&\leq&\lambda^{1-\frac{\gamma}{2}} r^{\frac{3-\gamma}2}\left(\int_{B_{r}(x_0)} \left|\nabla \varphi\left(x\right)\right|^2{dx}\right)^\frac12.
\end{eqnarray*}

This gives 
$$\norm{|u(\cdot, t)|^2}_{ \Lp{-1,2}{B_r(x_0)}} \leq c\, \lambda(t)^{1-\frac{\gamma}{2}} r^{\frac{3-\gamma}2}.$$ 

Likewise, using the bound \eqref{Pgrowth} and an analogous  argument we obtain a similar bound for $p$. Thus  \eqref{upbound} is proved.

Next we shall make use of the well-known energy equality:
\begin{eqnarray*}
\lefteqn{ \int_{B_R}|u(x, t)|^2 \phi(x,t) dx + 2\nu\int_{T_1}^t\int_{B_R} |\nabla u|^2 \phi(x, s) dxds} \\
&=& \int_{T_1}^t\int_{B_R} |u|^2 (\phi_t +\nu\Delta \phi) dx ds + \int_{T_1}^t\int_{B_R}(|u|^2 + 2p)u\cdot \nabla \phi dx ds,
\end{eqnarray*}
which holds for every ball $B_R=B_R(0)$, $t\in (T_1, T)$, and any nonnegative function $\phi \in C_0^{\infty}(\R^3\times\R)$ vanishing in a neighborhood of the parabolic boundary $B_R\times\{t=T_1\} \cup \partial B_R\times[T_1, T]$ of the cylinder $B_R\times(T_1,T)$.

Let  $T_\epsilon=T-\epsilon$ for sufficiently small $\epsilon>0$, say, $\epsilon\in(1/2,0)$. For any balls
$$B_s=B_s(0),\quad  B_\rho=B_\rho(0),$$
with $1\leq s<\rho\leq 2$, we consider a test function  $\phi(x,t)=\eta_1(x)\eta_2(t)$ where $\eta_1\in  C_0^{\infty}(B_\rho)$, $0\leq \eta_1\leq 1$ in $\R^n$, $\eta_1\equiv 1$ on $B_s$, and $$|\nabla^{\alpha} \eta_1|\leq \frac{c}{(\rho-s)^{|\alpha|}}$$ for all multi-indices $\alpha$ with $|\alpha|\leq 3$. The function $\eta_2(t)$ is chosen so that 
$\eta_2\in C_0^{\infty}(T_\epsilon-\rho^2, T_{\epsilon}+\rho^2)$, $0\leq\eta_2\leq 1$ in $\R$, $\eta_2(t)\equiv 1$ for $t\in [T_\epsilon-s^2, T_\epsilon+s^2]$, and $$|\eta'_2(t)|\leq \frac{c}{\rho^2-s^2}\leq \frac{c}{\rho-s}.$$

Thus
$$|\nabla \phi_t|+ |\nabla \Delta \phi|\leq  \frac{c}{(\rho-s)^3}, \quad | \nabla^2 \phi|\leq  \frac{c}{(\rho-s)^2},\quad |\nabla  \phi|\leq  \frac{c}{\rho-s}.$$

Let 
$$I(s,\epsilon)=\sup_{T_\epsilon-s^2\leq t\leq T_\epsilon}\int_{B_s}|u(x, t)|^2 dx + \int_{T_\epsilon-s^2}^{T_\epsilon}\int_{B_s}|\nabla u(x, t)|^2 dx dt,$$
which is a finite quantity provided $\epsilon>0$. 

Using $\phi$ as a test function in the  energy equality above we have
\begin{eqnarray*}
I(s,\epsilon)&\leq& c\,\int_{T_\epsilon-\rho^2}^{T_\epsilon} \norm{|u|^2}_{L^{-1, \, 2}(B_\rho)}\norm{\nabla \phi_t +\nabla \Delta \phi}_{L^2(B_{\rho})} dt+ \nonumber\\
\quad&& +\, c\,\int_{T_\epsilon-\rho^2}^{T_\epsilon} \norm{|u|^2 + 2p}_{L^{-1, \, 2}(B_\rho)}\norm{\nabla u\cdot \nabla \phi + u\cdot \nabla^2\phi}_{L^2(B_{\rho})} dt\nonumber \\
&\leq& c\, \int_{T_\epsilon-\rho^2}^{T_\epsilon} \lambda(t)^{1-\frac{\gamma}{2}}\norm{\nabla \phi_t +\nabla \Delta \phi}_{L^2(B_{\rho})} dt+ \nonumber\\
\quad&& +\, c\, \int_{T_\epsilon-\rho^2}^{T_\epsilon} \lambda(t)^{1-\frac{\gamma}{2}}\norm{\nabla u\cdot \nabla \phi + u\cdot \nabla^2\phi}_{L^2(B_{\rho})} dt,
\end{eqnarray*}
where we  used \eqref{upbound} and the fact that $1\leq \rho\leq 2$. From the choice of $\phi$, this gives 
\begin{eqnarray*}
I(s,\epsilon)&\leq&  \frac{c}{(\rho-s)^3}\, \int_{T_\epsilon-\rho^2}^{T_\epsilon} \lambda(t)^{1-\frac{\gamma}{2}}  dt+ \nonumber\\
\quad&& +\, c\, \int_{T_\epsilon-\rho^2}^{T_\epsilon} \lambda(t)^{1-\frac{\gamma}{2}}\left(\frac{\norm{\nabla u}_{L^2(B_\rho)}}{\rho-s} +
\frac{\norm{u}_{L^2(B_\rho)}}{(\rho-s)^2}\right) dt.
\end{eqnarray*}

Thus by H\"older's inequality we find
\begin{eqnarray*}
I(s,\epsilon)&\leq&  \frac{c}{(\rho-s)^3}\, \int_{T_\epsilon-\rho^2}^{T_\epsilon} \lambda(t)^{1-\frac{\gamma}{2}}  dt+ \nonumber\\
&& +\frac{c}{\rho-s}\, \left(\int_{T_\epsilon-\rho^2}^{T_\epsilon} \lambda(t)^{2-\gamma} dt \right)^{\frac12}
\left(\int_{T_\epsilon-\rho^2}^{T_\epsilon} \norm{\nabla u}_{L^2(B_\rho)}^2 dt \right)^{\frac12}+\\ 
&&+ \frac{c}{(\rho-s)^2} \left(\int_{T_\epsilon-\rho^2}^{T_\epsilon} \lambda(t)^{2-\gamma} dt \right)^{\frac12}
\left(\sup_{T_\epsilon-\rho^2\leq t\leq T_\epsilon} \norm{u(\cdot, t)}_{L^2(B_\rho)}^2 \right)^{\frac12}.
\end{eqnarray*}

Now using Young's inequality we arrive at 
\begin{eqnarray*}
I(s,\epsilon)&\leq&  \frac{c}{(\rho-s)^3}\, \int_{T_\epsilon-\rho^2}^{T_\epsilon} \lambda(t)^{1-\frac{\gamma}{2}}  dt+ \nonumber\\
\qquad\qquad&& +\left(\frac{c}{(\rho-s)^2} + \frac{c}{(\rho-s)^4} \right) \int_{T_\epsilon-\rho^2}^{T_\epsilon} \lambda(t)^{2-\gamma} dt + \frac{1}{2} I(\rho,\epsilon)
\end{eqnarray*}

As this holds for all $1\leq s<\rho\leq 2$ by Lemma \ref{Giusti} we find 
\begin{eqnarray*}
I(1,\epsilon) &\leq&  C\, \int_{T_\epsilon-4}^{T_\epsilon} \lambda(t)^{1-\frac{\gamma}{2}}  dt+ C\, \int_{T_\epsilon-4}^{T_\epsilon} \lambda(t)^{2-\gamma}  dt\\
&\leq&  C\, \int_{T-5}^{T} [\lambda(t)^{1-\frac{\gamma}{2}}   + \lambda(t)^{2-\gamma}]  dt\\
&\leq&  C(a, \gamma) <+\infty.
\end{eqnarray*}

Since this holds for every  $\epsilon\in(1/2,0)$ we deduce that \eqref{needtocheck} holds and thus the proof is complete.
\end{proof}

\bibliographystyle{plain}

\begin{thebibliography}{1}

\bibitem{Adams75}

D. R. Adams.
\newblock A note on Riesz potentials.
\newblock {\em Duke Math. J.}, 42(4):765--778, 1975.
\bibitem{CKN82}
L. Caffarelli, R. Kohn, and L. Nirenberg.
\newblock Partial regularity of suitable weak solutions of the
  Navier--Stokes equations.
\newblock {\em Comm.  Pure  Appl. Math.}, 35(6):771--831,
  1982.

\bibitem{FR94A}
J. Frehse  and M. R{\r{u}}{\v{z}}i{\v{c}}ka.
\newblock On the regularity of the stationary Navier--Stokes equations.
\newblock  {\em Ann. Sc. Norm. Pisa}, 21:63--95,  1994.

\bibitem{FR94B}
J. Frehse  and M. R{\r{u}}{\v{z}}i{\v{c}}ka.
\newblock Regularity of the stationary Navier--Stokes equations in bounded domains.
\newblock {\em Arch. Rat. Mech. Anal.} 128:361--381,  1994.

\bibitem{Ga94}
G. Galdi.
\newblock {\em  An introduction to the mathematical theory of Navier--Stokes equations,
I, II}. 
\newblock Springer, 1994.

\bibitem{GiMo82}
M. Giaquinta and G. Modica.
\newblock Nonlinear systems of the type of the stationary Navier--Stokes
  system.
\newblock {\em J. Reine Angew. Math.}, 330:173--214, 1982.

\bibitem{Giu03}
E. Giusti.
\newblock {\em Direct methods in the calculus of variations}.
\newblock World Scientific Publishing Co., Inc., River Edge, NJ, 2003.


\bibitem{Kato92} 
T. Kato.  
\newblock  Strong solutions of the Navier-Stokes equation in Morrey spaces. 
\newblock {\em Bol. Soc. Brasil. Mat. (N.S.)}, 22(2):127--155,  1992. 


\bibitem{Lady69}
O. A. Ladyzhenskaya.
\newblock {\em The mathematical theory of viscous incompressible flow},
  2nd edition.
\newblock Gordon and Breach, New York, 1969.

\bibitem{Leray34}
J. Leray.
\newblock Sur le mouvement d'un liquide visqueux emplissant l'espace.
\newblock {\em Acta Math.}, 63(1):193--248, 1934.

\bibitem{MV06}

V. G. Maz'ya and I. E. Verbitsky.
\newblock Form boundedness of the general second-order differential operator.
\newblock {\em  Comm. Pure Appl. Math.}, 59(9):1286--1329,  2006.

\bibitem{NeRuSv96}
J. Ne{\v{c}}as, M. R{\r{u}}{\v{z}}i{\v{c}}ka, and V.  {\v{S}}ver{\'a}k.
\newblock On {L}eray's self-similar solutions of the {N}avier--{S}tokes
  equations.
\newblock {\em Acta Math.}, 176(2):283--294, 1996.


\bibitem{Peetre66} 
J. Peetre.
\newblock  On convolution operators leaving $L^{p, \lambda}$ spaces invariant.
\newblock  {\em Ann. Mat. Pura Appl.},  72(4):295--304, 1966.

\bibitem{Phuc14}
N. C. Phuc.
\newblock The Navier-Stokes equations in nonendpoint borderline Lorentz spaces.
\newblock {\em J. Math. Fluid Mech.}, 17: 741--760, 2015.

\bibitem{Stein70}
E. M. Stein.
\newblock {\em Singular integrals and differentiability properties of
  functions}.
\newblock Princeton Math. Ser., 30. Princeton Univ. Press, Princeton, N J, 1970.

\bibitem{Stein93}
E. M. Stein.
\newblock {\em Harmonic analysis: real-variable methods, orthogonality, and oscillatory
integrals}.
\newblock Princeton University Press, 1993.

\bibitem{Temam77}
R. Temam.
\newblock {\em Navier--Stokes equations: theory and numerical analysis}.
\newblock North-Holland, Amsterdam and New York, 1977.

\bibitem{Tsai98}
T.-P. Tsai.
\newblock On Leray's self-similar solutions of the Navier--Stokes
  equations satisfying local energy estimates.
\newblock {\em Arch.  Rat. Mech.  Anal.}, 143(1):29--51,
  1998.

\bibitem{Stru95}
M. Struwe. 
\newblock Regular solutions of the stationary Navier--Stokes equations on $\R^5$.
\newblock {\em Math. Ann.}, 302:719--741, 1995.
\end{thebibliography}

\end{document}